\newcommand{\C}{\mathbb{C}}
\newcommand{\LLL}{\mathbb{L}}
\newcommand{\QQ}{\mathbb{Q}}
\newcommand{\NN}{\mathbb{N}}
\newcommand{\PP}{\mathbb{P}}
\newcommand{\Cc}{\mathcal C}
\newcommand{\MM}{\mathcal M}
\newcommand{\gr}{\hbox{Gr}}
\newcommand{\wt}{\widetilde}
\newcommand{\rom}{\romannumeral}
\newcommand*{\da@rightarrow}{\mathchar"0\hexnumber@\symAMSa 4B }
\newcommand*{\da@leftarrow}{\mathchar"0\hexnumber@\symAMSa 4C }
\newcommand*{\xdashrightarrow}[2][]{%
  \mathrel{%
    \mathpalette{\da@xarrow{#1}{#2}{}\da@rightarrow{\,}{}}{}%
  }%
}
\newcommand{\xdashleftarrow}[2][]{%
  \mathrel{%
    \mathpalette{\da@xarrow{#1}{#2}\da@leftarrow{}{}{\,}}{}%
  }%
}
\newcommand*{\da@xarrow}[7]{%
  \sbox0{$\ifx#7\scriptstyle\scriptscriptstyle\else\scriptstyle\fi#5#1#6\m@th$}%
  \sbox2{$\ifx#7\scriptstyle\scriptscriptstyle\else\scriptstyle\fi#5#2#6\m@th$}%
  \sbox4{$#7\dabar@\m@th$}%
  \dimen@=\wd0 %
  \ifdim\wd2 >\dimen@
    \dimen@=\wd2 %
  \fi
  \count@=2 %
  \def\da@bars{\dabar@\dabar@}%
  \@whiledim\count@\wd4<\dimen@\do{%
    \advance\count@\@ne
    \expandafter\def\expandafter\da@bars\expandafter{%
      \da@bars
      \dabar@ 
    }%
  }%
  \mathrel{#3}%
  \mathrel{%
    \mathop{\da@bars}\limits
    \ifx\\#1\\%
    \else
      _{\copy0}%
    \fi
    \ifx\\#2\\%
    \else
      ^{\copy2}%
    \fi
  }%
  \mathrel{#4}%
}
\DeclareMathOperator{\aut}{Aut}
\DeclareMathOperator{\ide}{id}
\DeclareMathOperator{\ima}{Im}
\newtheorem{theorem}{Theorem}[section]
\newtheorem{lemma}[theorem]{Lemma}
\newtheorem{proposition}[theorem]{Proposition}
\newtheorem{conjecture}[theorem]{Conjecture}
\newtheorem{remark}[theorem]{Remark}
\newtheorem{definition}[theorem]{Definition}
\newtheorem{convention}{Conventions}
\newtheorem{notation}[theorem]{Notation}
\newtheorem{nonumbering}{Theorem}
\newtheorem{nonumberingt}{Acknowledgements}
\begin{document}
\author[Robert Laterveer]
{Robert Laterveer}

\address{Institut de Recherche Math\'ematique Avanc\'ee,
CNRS -- Universit\'e 
de Strasbourg,\
7 Rue Ren\'e Des\-car\-tes, 67084 Strasbourg CEDEX,
FRANCE.}
\email{robert.laterveer@math.unistra.fr}

\title{Algebraic cycles and very special cubic fourfolds}

\begin{abstract} Informed by the Bloch--Beilinson conjectures, Voisin has made a conjecture about $0$--cycles on self--products of Calabi--Yau varieties. In this note, we consider variant versions of Voisin's conjecture for cubic fourfolds, and for hyperk\"ahler varieties. We present examples for which these conjectures are verified, by considering certain very special cubic fourfolds and their Fano varieties of lines.
 \end{abstract}

\keywords{Algebraic cycles, Chow groups, motives, Bloch--Beilinson filtration, hyperk\"ahler varieties, Fano variety of lines on cubic fourfold, Voisin's conjecture}
\subjclass[2010]{Primary 14C15, 14C25, 14C30.}

\maketitle

\section{Introduction}

Let $X$ be a smooth projective variety over $\C$, and let $A^i(X):=CH^i(X)_{\QQ}$ denote the Chow groups of $X$ (i.e. the groups of codimension $i$ algebraic cycles on $X$ with $\QQ$--coefficients, modulo rational equivalence). With respect to any reasonable topology, the world of algebraic cycles is densely filled with open problems \cite{B}, \cite{J2}, \cite{MNP}, \cite{Vo}. One of these open problems is the following intriguing conjecture of Voisin, which can be seen as a version of Bloch's conjecture for varieties of geometric genus one:

\begin{conjecture}[Voisin \cite{V9}]\label{conjvois} Let $X$ be a smooth projective complex variety of dimension $n$ with $h^{j,0}(X)=0$ for $0<j<n$ and $p_g(X)=1$.  
For any $0$--cycles $a,a^\prime\in A^n(X)$ of degree zero, we have
  \[ a\times a^\prime=(-1)^n a^\prime\times a\ \ \ \hbox{in}\ A^{2n}(X\times X)\ .\]
  (Here $a\times a^\prime$ is short--hand for the cycle class $(p_1)^\ast(a)\cdot(p_2)^\ast(a^\prime)\in A^{2n}(X\times X)$, where $p_1, p_2$ denote projection on the first, resp. second factor.)
   \end{conjecture}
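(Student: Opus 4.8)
The plan is to translate the desired identity in $A^{2n}(X\times X)$ into a vanishing statement for the deepest graded piece of a (conjectural) Bloch--Beilinson filtration, and then to force that vanishing by comparing the action of the swap involution on $0$--cycles with its action on holomorphic top forms. Let $\tau\colon X\times X\to X\times X$ be the involution exchanging the two factors, and set
\[ z:=a\times a^\prime-(-1)^n\,a^\prime\times a\ \in\ A^{2n}(X\times X)\ . \]
Since $a,a^\prime$ have degree zero their cohomology classes vanish, so the $0$--cycle $z$ on the $2n$--fold $X\times X$ is homologically trivial. From $\tau^\ast(a\times a^\prime)=a^\prime\times a$ one computes directly that $\tau^\ast z=(-1)^{n+1}z$, so that $z$ lies in the $(-1)^{n+1}$--eigenspace of $\tau^\ast$ acting on $A^{2n}(X\times X)$; the conjecture is exactly the assertion $z=0$.

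Next I would bring in the hypotheses through the holomorphic forms on $X\times X$. By the K\"unneth decomposition $H^{2n,0}(X\times X)=\bigoplus_{p+p^\prime=2n}H^{p,0}(X)\otimes H^{p^\prime,0}(X)$, and since $h^{p,0}(X)=0$ for $0<p<n$ while $h^{n,0}(X)=1$, the only surviving summand is the line $H^{n,0}(X)\otimes H^{n,0}(X)$, spanned by $\omega\boxtimes\omega$ for a generator $\omega$ of $H^{n,0}(X)$. On this line $\tau^\ast$ acts by the Koszul sign $(-1)^{n^2}=(-1)^n$. Moreover $h^{1,0}(X)=0$ gives $\mathrm{Alb}(X)=0$, so every degree--zero $0$--cycle is automatically Albanese--trivial; under the Bloch--Beilinson yoga this forces $\mathrm{Gr}^j_FA^n(X)=0$ for $j<n$, so that $A^n_{\mathrm{hom}}(X)=F^nA^n(X)$ is governed entirely by the transcendental Hodge structure $H^n_{\mathrm{tr}}(X)$, whose holomorphic part is the line $\C\,\omega$.

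Granting such a filtration with the expected multiplicativity $F^i\cdot F^j\subseteq F^{i+j}$, the product $a\times a^\prime$, and hence $z$, lies in the deepest step $F^{2n}A^{2n}(X\times X)$. By the Mumford principle the graded piece $\mathrm{Gr}^{2n}_FA^{2n}(X\times X)$ of $0$--cycles is detected by a regulator into a group built from the holomorphic $2n$--forms $H^{2n,0}(X\times X)$. This regulator is $\tau$--equivariant; but $z$ sits in the $(-1)^{n+1}$--eigenspace of the source while the target line $H^{n,0}\otimes H^{n,0}$ sits in the $(-1)^n$--eigenspace. The eigenvalues disagree, so the period of $z$ vanishes, and the (conjectural) injectivity of the regulator on $\mathrm{Gr}^{2n}_F$ then yields $z=0$.

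The main obstacle is precisely this last input: the Bloch--Beilinson filtration and the injectivity of the regulator on its deepest graded piece are themselves conjectural and far out of reach in general --- which is why the statement remains a conjecture rather than a theorem. The only realistic way to turn the eigenvalue computation above into an honest proof is to supply unconditional motivic input, replacing the filtration by an explicit Chow--Künneth decomposition together with Kimura--O'Sullivan finite--dimensionality, so that the sign mismatch becomes a genuine vanishing of correspondences. Such input is available only when $X$ carries considerable extra structure, and it is for this reason that one settles, as in this note, for variant versions verified on very special cubic fourfolds and their Fano varieties of lines rather than for the conjecture in its stated generality.
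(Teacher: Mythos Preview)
The statement you were asked to address is Conjecture~\ref{conjvois}, which the paper records as an \emph{open conjecture} due to Voisin and does not attempt to prove. There is therefore no proof in the paper to compare your attempt against.

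What you have written is not a proof but a heuristic motivation, and you correctly identify it as such in your final paragraph: the argument rests on the existence of a Bloch--Beilinson filtration with the expected multiplicativity, together with the injectivity of a regulator on the deepest graded piece, both of which are themselves wide open. Your eigenvalue computation is correct (the swap $\tau$ acts as $(-1)^{n+1}$ on $z$ and as $(-1)^n$ on the line $H^{n,0}(X)\otimes H^{n,0}(X)$), and this sign mismatch is indeed the standard reason, going back to Voisin's original paper, for believing the conjecture. The paper's introduction offers a parallel heuristic for the hyperk\"ahler variant (Conjecture~\ref{conjHK}), phrased instead in terms of the generalized Hodge conjecture forcing $\wedge^2 H^s(X)$ to be supported on a divisor; the underlying idea is the same.

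In short: there is no gap in your reasoning beyond the one you have already named, and no proof in the paper to compare it with. The paper's contribution is precisely to bypass the conjectural input by restricting to very special cubic fourfolds, where finite--dimensionality and explicit Chow--K\"unneth data are available, exactly as you anticipate in your closing sentences.
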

   
 Conjecture \ref{conjvois} is still wide open for a general $K3$ surface (on the positive side, cf. \cite{V9}, \cite{moi}, \cite{des}, \cite{tod}, \cite{BLP} for some cases where this conjecture is verified).

Let us now suppose that $X$ is a hyperk\"ahler variety (i.e., a projective irreducible holomorphic symplectic manifold, cf. \cite{Beau0}, \cite{Beau1}). Conjecture \ref{conjvois} does not apply verbatim to $X$ (the Calabi--Yau condition 
$h^{j,0}(X)=0$ for $0<j<n$ is not satisfied), yet one can adapt conjecture \ref{conjvois} to make sense for $X$. For this adaptation, we will optimistically assume the Chow ring of $X$ has a bigraded ring structure $A^\ast_{(\ast)}(X)$, where each $A^i(X)$ splits into pieces
  \[ A^i(X) =\bigoplus_j A^i_{(j)}(X)\ .\]
 (Conjecturally, such a splitting exists for all hyperk\"ahler varieties, and the piece $A^i_{(j)}(X)$ should be isomorphic to the graded $\gr^j_F A^i(X)$ for the conjectural Bloch--Beilinson filtration \cite{Beau3}.) 
 Since the piece $A^i_{(j)}(X)$ should be related to the cohomology group $H^{2i-j}(X)$, and 
   \[ \wedge^2  H^{s}(X)\ \subset\ H^{2s}(X\times X) \]
   should be supported on a divisor for any $s$ (in view of the generalized Hodge conjecture), we arrive at the following version of conjecture \ref{conjvois}:

\begin{conjecture}\label{conjHK} Let $X$ be a hyperk\"ahler variety of dimension $2m$. Let $a,a^\prime\in A^{2m}_{(j)}(X)$. Then
  \[ a\times a^\prime -a^\prime\times a=0\ \ \ \hbox{in}\ A^{4m}_{}(X\times X)\ .\]
  \end{conjecture}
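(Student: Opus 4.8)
\medskip
\noindent\textit{Proof proposal.}
I would attack Conjecture \ref{conjHK} only for the hyperk\"ahler fourfolds the title promises: $X=F=F(Y)$, the variety of lines on a smooth cubic fourfold $Y\subset\PP^5$, which by Beauville--Donagi is of $K3^{[2]}$--type. The first move is to place oneself in the framework of Shen and Vial: $F$ carries a multiplicative, self--dual Chow--K\"unneth decomposition $\{\pi^i_F\}$ with $\pi^{\mathrm{odd}}_F=0$, and it is this decomposition that furnishes the bigraded ring $A^\ast_{(\ast)}(F)$ of the statement, with $A^4_{(j)}(F)=(\pi^{8-j}_F)_\ast A^4(F)$ for $j\in\{0,2,4\}$. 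Writing $M_j:=(F,\pi^{8-j}_F,0)$, one has $A^4_{(j)}(F)=A^4(M_j)$; and since the transposition $\tau$ of $F\times F$ commutes with $\pi^{8-j}_F\times\pi^{8-j}_F$, for $a,a'\in A^4_{(j)}(F)$ one gets
\[ a\times a'-a'\times a=\bigl((\Delta_{F\times F}-\Gamma_\tau)\circ(\pi^{8-j}_F\times\pi^{8-j}_F)\bigr)_\ast(a\times a')\ \in\ A^8\bigl(\wedge^2 M_j\bigr).\]
Thus the conjecture for $F$ follows once $A^8(\wedge^2 M_j)=0$ for $j=2,4$, the case $j=0$ being empty as $M_0\cong\QQ(-4)$ has rank $1$.

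Next I would unwind $M_2$ and $M_4$ via Shen--Vial's description of $h(F)$ in terms of the transcendental motive $t(Y)\subset h^4(Y)$ of the cubic: the transcendental summand of $h^2(F)$ is isomorphic, as a Chow motive and up to Tate twist, to $t(Y)$; by hard Lefschetz the transcendental summand of $h^6(F)$ is a further Tate twist of $t(Y)$; and the transcendental summand of $h^4(F)$ is a Tate twist of $\operatorname{Sym}^2$ of the transcendental summand of $h^2(F)$. A short computation shows that the Tate parts and the ``$t(Y)$--linear'' parts of $M_2,M_4$ contribute nothing to $A^4$ for dimension reasons (for instance $(\pi_{t(Y)})_\ast$ kills $A^4(Y)=\QQ$, being a correspondence of K\"unneth bidegree $(4,4)$), so that $A^4_{(2)}(F)$ is cut out of a single Tate twist of $t(Y)$ and $A^4_{(4)}(F)$ out of a single Tate twist of $\operatorname{Sym}^2 t(Y)$. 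Feeding this back into the displayed identity, Conjecture \ref{conjHK} for $F(Y)$ reduces to the vanishing of the top Chow group of suitable Tate twists of $\wedge^2 t(Y)$ and of $\wedge^2\operatorname{Sym}^2 t(Y)$ --- in effect a ``Voisin conjecture for the transcendental motive of $Y$''. The relevant cohomology groups $\wedge^2 H^4_{\mathrm{tr}}(Y)$ and $\wedge^2\operatorname{Sym}^2 H^4_{\mathrm{tr}}(Y)$ carry no holomorphic top form (indeed $\wedge^2 H^4_{\mathrm{tr}}(Y)$ has coniveau $\geq 3$), so the required vanishing is exactly what the Bloch--Beilinson conjectures predict; the issue is to make it unconditional.

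For this last point I would restrict to \emph{very special} $Y$: a cubic fourfold lying in a Hassett divisor $\mathcal{C}_d$ for an admissible $d$ and possessing an associated $K3$ surface $S$, with $S$ chosen of complex--multiplication type (e.g.\ a singular $K3$), or more generally a $K3$ with finite--dimensional motive of abelian type; the Fermat cubic fourfold is the paradigm case. For such $Y$: (i) the equivalence of the Kuznetsov component of $Y$ with $D^b(S)$ upgrades --- via its motivic incarnation, as in work of B\"ulles, Fu--Vial and the author --- to an isomorphism of Chow motives $t(Y)\cong t_2(S)$ up to Tate twist; (ii) $t_2(S)$ is then a direct summand of the motive of an abelian surface, hence finite--dimensional in the sense of Kimura--O'Sullivan, and so are all of $\wedge^2 t(Y)$, $\operatorname{Sym}^2 t(Y)$ and their iterates; (iii) for such $S$ the transcendental Hodge structure is so small that the constructions $\wedge^2 H^4_{\mathrm{tr}}(Y)$ and $\wedge^2\operatorname{Sym}^2 H^4_{\mathrm{tr}}(Y)$ are explicit and contain no Hodge classes of the forbidden type. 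Combining (ii) and (iii) with Kimura's nilpotence theorem promotes the cohomological triviality of these motives to the rational triviality of the corresponding cycles, giving $A^8(\wedge^2 M_j)=0$ and hence Conjecture \ref{conjHK} for $F(Y)$.

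The transfer from cohomology to rational equivalence is formal once finite--dimensionality is in hand, so the genuine obstacle is producing cubic fourfolds that meet \emph{all} the hypotheses at once: an associated $K3$ surface forces $d$ into Hassett's admissible range, the Chow--motivic upgrade of $D^b(S)\simeq$ Kuznetsov component is available only for infinitely many (not all) such $d$, finite--dimensionality of $h(S)$ is known essentially only for $S$ of abelian, Kummer or CM type, and the Hodge--theoretic input is cheap for CM $S$ but delicate otherwise. A secondary, purely book-keeping, difficulty lurks in the second step: one must be sure that the antisymmetric combinations $a\times a'-a'\times a$ genuinely land in $\wedge^2$--summands built from $t(Y)$, and never in a $\operatorname{Sym}^2 t(Y)$--summand, where the analogous vanishing would fail --- here the self--duality and multiplicativity of the Shen--Vial decomposition are what save the day.
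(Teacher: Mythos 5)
Your overall strategy --- reduce to the transcendental motive $t(Y)$ of the cubic, identify it with $t(S)(1)$ for an associated $K3$ surface of large Picard number, and exploit finite--dimensionality --- is the same circle of ideas as the paper, but the two steps where you actually have to produce an argument both have gaps. First, your reduction of the statement for $F(Y)$ to the vanishing of $A^8(\wedge^2 M_j)$ leans on a multiplicative, self--dual CK decomposition of $F(Y)$ in which $h^4$ contains $\operatorname{Sym}^2$ of the transcendental part of $h^2$ as a Chow motive; but the multiplicativity of the Shen--Vial Fourier decomposition is precisely what is \emph{not} known for an arbitrary smooth cubic fourfold (see remark \ref{pity}). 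The paper sidesteps this by using only two unconditional statements of Shen--Vial (theorem \ref{chowringfano}: the isomorphism $\cdot\ell\colon A^2_{(2)}(X)\xrightarrow{\cong}A^4_{(2)}(X)$ and the surjection $A^2_{(2)}(X)\otimes A^2_{(2)}(X)\twoheadrightarrow A^4_{(4)}(X)$) together with the incidence correspondence $I={}^tP\circ P$ and Paranjape's surjectivity of $P_\ast$, to prove the clean equivalence of proposition \ref{equiv}; everything is then done on $Y$ itself, where only $A^3_{hom}(Y)$ matters and no $\operatorname{Sym}^2$ bookkeeping arises.

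Second, and more seriously, your final step (``combining (ii) and (iii) with Kimura's nilpotence theorem'') needs the projector onto $\wedge^2 H^4_{tr}(Y)$ to be homologous to an algebraic cycle of the required decomposable shape, i.e. it needs the Hodge classes in $\wedge^2 H^4_{tr}(Y)$ to be algebraic --- essentially the Hodge conjecture for $Y\times Y$. Your claim that these spaces ``contain no Hodge classes of the forbidden type'' is not correct as stated: $\wedge^2H^4_{tr}(Y)$ always contains the nonzero $(4,4)$--class $H^{3,1}(Y)\wedge H^{1,3}(Y)$, and proving its algebraicity is the whole difficulty (the paper makes this an explicit hypothesis in theorem \ref{main1} and verifies it by hand for the two cubics of theorem \ref{main2}). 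Worse, for a very special cubic with $\rho(S)=19$ one has $\dim H^4_{tr}(Y)=3$, so $\wedge^2H^4_{tr}(Y)$ also has a $(5,3)+(3,5)$ part and is not spanned by Hodge classes at all; the nilpotence scheme you describe does not get off the ground there. The paper's proof of theorem \ref{main} avoids nilpotence entirely at this stage: it injects $A^3_{hom}(Y)\cong A^2_{hom}(S)$ into $A^2_{AJ}(A)$ for an abelian surface $A$ (via Shioda--Inose structures and Voisin's theorem on symplectic involutions), and then quotes Voisin's unconditional identity $b\times b^\prime-(-1)^g\,b^\prime\times b=0$ for $b,b^\prime\in A^g_{(g)}(A)$. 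To salvage your route you should either restrict to $\dim H^4_{tr}(Y)=2$ and add the Hodge conjecture for $Y\times Y$ as a hypothesis (which is exactly theorem \ref{main1}), or replace the nilpotence step by the transport to an abelian surface.
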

  
  (We note that in conjecture \ref{conjHK}, we silently presuppose that $A^{2m}_{(j)}(X)=0$ for $j$ odd; this is the case if the bigrading is related to the conjectural Bloch--Beilinson filtration.) Conjecture \ref{conjHK} is verified for a family of hyperk\"ahler fourfolds in \cite{voishk}.
  
 We can also consider a variant of conjecture \ref{conjHK} for cubic fourfolds (the point being that cubic fourfolds have $h^{4,0}=0$ and $h^{3,1}=1$, so cohomologically they look like a ``shifted Calabi--Yau variety''):
 
 \begin{conjecture}\label{conjcube} Let $Y\subset\PP^5(\C)$ be a smooth cubic fourfold. Let $a,a^\prime\in A^3_{hom}(X)$. Then
    \[ a\times a^\prime -a^\prime\times a=0\ \ \ \hbox{in}\ A^{6}_{}(Y\times Y)\ .\]
  \end{conjecture}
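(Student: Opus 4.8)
The plan is to reduce the asserted antisymmetry to a vanishing in the Chow group of the (conjectural) transcendental summand of the motive of $Y$, to feed in the generalized Hodge conjecture as the geometric input, and finally to invoke a Bloch--Beilinson--type lifting argument; it is this last step that obstructs a proof for an arbitrary cubic fourfold.

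First I would fix a Chow--K\"unneth decomposition $\mathfrak{h}(Y)=\bigoplus_i \mathfrak{h}^i(Y)$ and split off the transcendental summand $\mathfrak{t}(Y)$ of $\mathfrak{h}^4(Y)$, namely the piece realizing the transcendental cohomology $H^4_{tr}(Y)$, which after a Tate twist is a Hodge structure of $K3$ type. Since $H^{\mathrm{odd}}(Y)=0$ one has $\mathfrak{h}^{\mathrm{odd}}(Y)=0$, the intermediate Jacobian $J^3(Y)$ vanishes, and the Abel--Jacobi map on $A^3_{hom}(Y)$ is identically zero; moreover the algebraic part of $\mathfrak{h}^4(Y)$ is a sum of Tate motives $\mathbf{1}(-2)$, which contribute nothing to $A^3$. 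Hence $A^3_{hom}(Y)$ is captured \emph{entirely} by $\mathfrak{t}(Y)$, and concretely the incidence correspondence with the Fano variety of lines $F$ (\`a la Shen--Vial) identifies $A^3_{hom}(Y)$ with a twist of the Chow group of $\mathfrak{t}(Y)$. The exterior product $a\times a'$ of two such cycles is then governed by $\mathfrak{t}(Y)\otimes\mathfrak{t}(Y)$, and the swap involution $\iota$ on $Y\times Y$ interchanges the two factors, so that $a\times a'-a'\times a=(\mathrm{id}-\iota_\ast)(a\times a')$. Thus one must show that the class of $a\times a'-a'\times a$ vanishes in the appropriate Chow group of the antisymmetric summand $\wedge^2\mathfrak{t}(Y)$.

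Next I would supply the Hodge-theoretic input. Because $a$ and $a'$ are homologically trivial, the cycle $a\times a'-a'\times a$ is automatically homologically trivial, so the issue is one of rational, not homological, equivalence. The relevant cohomology $\wedge^2 H^4_{tr}(Y)$ carries only the Hodge types $(5,3),(4,4),(3,5)$, hence has coniveau $3$; modelling $\mathfrak{t}(Y)$ on a weight-$2$ $K3$-type structure $T(-1)$ one has $\wedge^2 H^4_{tr}(Y)=(\wedge^2 T)(-2)$ with $\wedge^2 T$ of coniveau $1$. By the generalized Hodge conjecture $\wedge^2 T$ is supported on a divisor, which is exactly the heuristic invoked in the Introduction; after the Tate twist this places $\wedge^2 H^4_{tr}(Y)$ on a subvariety of $Y\times Y$ of codimension $3$. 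The strategy is then to promote this cohomological support to an algebraic correspondence and use it to bound the coniveau of the antisymmetric piece, forcing the component of $a\times a'-a'\times a$ in $\wedge^2\mathfrak{t}(Y)$ into a Chow group that a Bloch--Beilinson filtration predicts to be zero.

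The hard part will be this final lifting step, and it is precisely here that a proof for an arbitrary smooth cubic fourfold breaks down. Converting the Hodge-theoretic support of $\wedge^2 H^4_{tr}(Y)$ into an honest algebraic cycle requires the generalized Hodge conjecture, and passing from the resulting cohomological identity to the desired rational equivalence requires a Bloch--Beilinson filtration with its conjectural functoriality and vanishing properties; neither is available in general. In effect the statement reduces to Voisin's Conjecture \ref{conjvois} for the $K3$-type motive $\mathfrak{t}(Y)$, which for a very general cubic fourfold is not even the motive of an actual $K3$ surface, and which remains wide open. This is exactly why the present paper restricts to very special cubic fourfolds: there $\mathfrak{t}(Y)$ is realized by a $K3$ surface with complex multiplication or large Picard rank, and Kimura--O'Sullivan finite-dimensionality together with explicit correspondences replace the two conjectural ingredients, rendering the lifting step unconditional.
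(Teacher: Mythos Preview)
The statement you are addressing is a \emph{conjecture} in the paper, not a theorem; the paper offers no proof of it in full generality, only for the very special cubics of Theorem~\ref{main} and the two explicit cubics of Theorem~\ref{main2}. Your proposal is accordingly not a proof either, and you are fully aware of this: you flag the lifting step as conditional on the generalized Hodge conjecture and a Bloch--Beilinson filtration, and you correctly explain that the paper's restriction to very special cubics is precisely what makes substitutes for these ingredients available (via finite--dimensionality and explicit correspondences with $K3$ surfaces of large Picard rank, respectively abelian surfaces).

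As a heuristic, your outline matches the paper's own motivation almost exactly. The one--sentence rationale given in the Introduction is that $\wedge^2 H^{s}(X)\subset H^{2s}(X\times X)$ should be supported on a divisor by the generalized Hodge conjecture; this is the coniveau computation you carry out for $\wedge^2 H^4_{tr}(Y)$. Your reduction to the transcendental motive $\mathfrak t(Y)$ and the observation that $A^3_{hom}(Y)$ is governed entirely by it also appear in the paper (via Pedrini's $t(Y)$), both in the proof of Proposition~\ref{fd} and in the proof of Theorem~\ref{main1}; the latter is essentially your argument made unconditional under the extra hypotheses $\dim H^4_{tr}(Y)=2$, finite--dimensionality, and the Hodge conjecture for $Y\times Y$. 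So there is nothing to correct: you have accurately diagnosed both why the conjecture is plausible and why it remains open.
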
  
  
 Thanks to work of Shen--Vial \cite{SV}, the truth of conjecture \ref{conjcube} for a given smooth cubic $Y$ is equivalent to the truth of conjecture \ref{conjHK} for the Fano variety of lines on $Y$, cf. proposition \ref{equiv}.

  The main result of this note is that these conjectures are true in certain special cases:

 \begin{nonumbering}[=theorem \ref{main}]  Let $Y\subset\PP^5(\C)$ be a very special cubic. Then conjecture \ref{conjcube} is true for $Y$.
 
  Consequently, conjecture \ref{conjHK} is true for the Fano variety $X=F(Y)$ of lines in $Y$ (here, the notation $A^\ast_{(\ast)}(X)$ refers to the Fourier decomposition of \cite{SV}).
  \end{nonumbering}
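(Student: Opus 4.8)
The plan is to reduce conjecture \ref{conjcube} for $Y$ to the vanishing of one very small summand of the Chow motive of $Y\times Y$, and then to show, using the very special nature of $Y$, that this summand is a power of the Lefschetz motive. First I would locate $A^3_{hom}(Y)$ inside the motive. As for any smooth cubic fourfold, $h(Y)$ has a self-dual Chow--K\"unneth decomposition $h(Y)=\bigoplus_{i=0}^{8}h^i(Y)$ with $h^0=\mathbf 1$, $h^2=\mathbb{L}$, $h^6=\mathbb{L}^3$, $h^8=\mathbb{L}^4$, no odd part (the odd cohomology of $Y$ vanishes), and $h^4(Y)=(\mathbb{L}^2)^{\oplus\rho}\oplus t(Y)$, where $\rho$ is the rank of the span of the algebraic classes in $H^4(Y)$ and $t(Y)$ is the transcendental motive, with $H^\ast(t(Y))=H^4_{tr}(Y)$ sitting in cohomological degree $4$; here the splitting off of the algebraic part of $h^4(Y)$ uses the Hodge conjecture for $H^4$ of a cubic fourfold, and the correspondences involved can be lifted from those of \cite{SV}. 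Since $A^i(\mathbb{L}^k)=A^{i-k}(\mathrm{pt})$, substituting this into $A^3(Y)=\bigoplus_i A^3(h^i(Y))$ yields $A^3(Y)=\QQ\oplus A^3(t(Y))$, the $\QQ$-summand being detected by $H^6(Y)$; hence $A^3_{hom}(Y)=A^3(t(Y))$.

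Next I would reduce to a statement about $\wedge^2 t(Y)$. Let $\pi^t$ denote the projector onto $t(Y)$ and $\tau$ the involution of $Y\times Y$ exchanging the two factors. For $a,a'\in A^3_{hom}(Y)=A^3(t(Y))$ one has $a\times a'=(\pi^t\times\pi^t)_\ast(a\times a')\in A^6\bigl(t(Y)\otimes t(Y)\bigr)$; as $\Gamma_\tau^2=\Delta$, the operator $\tfrac12(\Delta-\Gamma_\tau)$ is idempotent on $t(Y)\otimes t(Y)$, and since $t(Y)$ is supported in the even degree $4$ its image $\wedge^2 t(Y)$ has cohomology $\wedge^2 H^4_{tr}(Y)$, placed in degree $8$. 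From the identity
\[ a\times a'-a'\times a=(\Delta-\Gamma_\tau)_\ast(a\times a')\ \in\ A^6\bigl(\wedge^2 t(Y)\bigr), \]
conjecture \ref{conjcube} for $Y$ follows once we establish $A^6\bigl(\wedge^2 t(Y)\bigr)=0$.

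This is where the very special hypothesis enters. Its effect is to force $H^4_{tr}(Y)$ to be two-dimensional with Hodge numbers $h^{3,1}=h^{1,3}=1$, and to provide an isomorphism of Chow motives $t(Y)\cong t_2(S)(-1)$ with $S$ a K3 surface of Picard number $20$ and $t_2(S)$ its transcendental motive — equivalently, $t(Y)$ is a Tate twist of a direct summand of the motive of an abelian variety of CM type, via the Shioda--Inose structure by which $t_2(S)$ is a direct summand of $h^1(E_1)\otimes h^1(E_2)$ for elliptic curves $E_1,E_2$ with complex multiplication. It follows that $\wedge^2 H^4_{tr}(Y)$ is one-dimensional of Hodge type $(4,4)$, i.e. of Tate type; and an explicit computation inside the motive of $(E_1\times E_2)^{2}$ — decomposing $\wedge^2\bigl(h^1(E_1)\otimes h^1(E_2)\bigr)$ into Lefschetz motives and pieces of Hodge type $(3,1)+(1,3)$, and using that for a CM elliptic curve the extra N\'eron--Severi class turns the Tate part of this decomposition into an honest Lefschetz motive — identifies $\wedge^2 t_2(S)$ with $\mathbb{L}^2$. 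Therefore $\wedge^2 t(Y)=\bigl(\wedge^2 t_2(S)\bigr)(-2)\cong\mathbb{L}^4$, so
\[ A^6\bigl(\wedge^2 t(Y)\bigr)=A^6(\mathbb{L}^4)=A^2(\mathrm{pt})=0, \]
which proves conjecture \ref{conjcube} for $Y$; the assertion for the Fano variety $X=F(Y)$ is then immediate from the Shen--Vial equivalence, proposition \ref{equiv}.

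The main obstacle is the input just used: everything else (the reduction to $A^6(\wedge^2 t(Y))=0$, and the passage $\wedge^2 t_2(S)\cong\mathbb{L}^2\Rightarrow A^6(\wedge^2 t(Y))=0$) is formal, but extracting the two-dimensionality of $H^4_{tr}(Y)$ and the motivic isomorphism $t(Y)\cong t_2(S)(-1)$ from the explicit definition of a very special cubic requires producing enough algebraic $2$-cycles on $Y$ and constructing the correspondences realizing the associated K3 surface. Two alternative ways to finish, once $t(Y)\cong t_2(S)(-1)$ is in hand, are worth keeping in mind: the required vanishing $A^6(\wedge^2 t(Y))=0$ is literally conjecture \ref{conjvois} for the K3 surface $S$, which is known for K3 surfaces of maximal Picard rank; and, if finite-dimensionality of $t(Y)$ is available, one may instead invoke the general principle that a finite-dimensional Chow motive whose cohomology is one-dimensional of Tate type is a Lefschetz motive. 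I would present whichever of these routes keeps the argument shortest for the particular family of cubics considered.
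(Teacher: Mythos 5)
Your reduction of conjecture \ref{conjcube} to the vanishing of $A^6(\wedge^2 t(Y))$ is correct and clean, but the way you establish that vanishing covers only part of the class of very special cubics, so there is a genuine gap. Definition \ref{def} asks for $\dim N^2H^4(Y)\ge 20$, equivalently $\rho(S)\ge 19$; it does not force $\rho(S)=20$. When $\rho(S)=19$ one has $\dim H^4_{tr}(Y)=\dim H^2_{tr}(S)=3$, so $\wedge^2 H^4_{tr}(Y)$ is three--dimensional and contains classes of Hodge type $(5,3)$ and $(3,5)$: it is not of Tate type, $\wedge^2 t(Y)$ cannot be a sum of Lefschetz motives, and your identification $\wedge^2 t(Y)\cong\LLL^4$ --- as well as the CM--elliptic--curve computation behind it, which presupposes a singular $K3$ with $t_2(S)$ a summand of $h^1(E_1)\otimes h^1(E_2)$ --- breaks down. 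Your fallback that Voisin's conjecture \ref{conjvois} is known for $K3$ surfaces of \emph{maximal} Picard rank likewise only covers $\rho=20$.

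The paper avoids this by never trying to kill $\wedge^2 t(Y)$ cohomologically. It uses $t(Y)\cong t(S)(1)$ together with a Shioda--Inose correspondence (lemma \ref{19}, valid for all $\rho\ge 19$) to embed $A^3_{hom}(Y)$ into $A^2_{AJ}(A)$ for an abelian surface $A$, and then quotes Voisin's theorem that $b\times b'-(-1)^g\,b'\times b=0$ for $b,b'\in A^g_{(g)}(A)$ in the Beauville decomposition; that argument is insensitive to whether $\wedge^2 H^2_{tr}$ is algebraic or even of Tate type. Your strategy, where it does apply, is essentially the paper's proof of theorem \ref{main1} (used for the two explicit cubics, which are assumed to satisfy $\dim N^2H^4(Y)=21$), including the reliance on finite--dimensionality and on the Hodge conjecture for $Y\times Y$ that you flag at the end. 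To repair your proof you would either have to restrict the statement to $\rho(S)=20$, or supply the abelian--surface reduction for the $\rho(S)=19$ case.
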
  
  
 Very special cubics (cf. definition \ref{def}) have the following property: there exist a countably infinite number of divisors $\Cc_d$ in the moduli space of cubic fourfolds (parametrizing special cubics in the sense of \cite{Has}), such that the very special cubics lie analytically dense in each $\Cc_d$.

  We also exhibit two explicit cubics for which these conjectures hold:
  
 \begin{nonumbering}[=theorem \ref{main2}] Let $Y\subset\PP^5(\C)$ be either the Fermat cubic
   \[ X_0^3 + X_1^3 +\cdots + X_5^3=0\ ,\]
or the smooth cubic defined as
   \[  X_0^3 + X_1^2X_5 + X_2^2X_4 + X_3^2X_2 +X_4^2X_1 +X_5^2X_3=0\ .\] 
 Then conjecture \ref{conjcube} holds for $Y$, and conjecture \ref{conjHK} holds for the Fano variety $X=F(Y)$.
  \end{nonumbering}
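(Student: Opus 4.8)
The plan is to reduce, via the Shen--Vial equivalence of proposition \ref{equiv}, to proving conjecture \ref{conjcube} for the two cubics $Y$ at hand (conjecture \ref{conjHK} for $F(Y)$ then follows from the same proposition), and to do this by verifying that each of these cubics satisfies definition \ref{def} and then invoking theorem \ref{main}. So the substance of the argument is the verification of ``very specialness'', which comes down to controlling the Chow motive of $Y$; in both cases it will turn out to be of abelian type, with transcendental part $H^4_{tr}(Y)$ of rank two and of CM type.

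For the Fermat cubic fourfold this is essentially classical. By Shioda's inductive structure for Fermat hypersurfaces, $\mathfrak h(Y)$ is a direct summand of the motive of a product of Fermat curves, hence of abelian type and a fortiori finite--dimensional in the sense of Kimura--O'Sullivan; combined with the known validity of the Hodge conjecture for Fermat hypersurfaces of this degree and dimension, one deduces that $Y$ has maximal Picard number, so that $H^4_{tr}(Y)$ is a rank--two Hodge structure of CM type and $Y$ is very special. For the second cubic, the key observation is that $Y$ is a cyclic triple cover $\pi\colon Y\to\PP^4$ branched over the cubic threefold $D=\{X_1^2X_5+X_2^2X_4+X_3^2X_2+X_4^2X_1+X_5^2X_3=0\}\subset\PP^4$, and that the coordinate permutation given by the $5$--cycle $(1\,5\,3\,2\,4)$ carries the equation of $D$ to $\sum_{i\in\ZZ/5}x_i^2x_{i+1}$: thus $D$ is projectively equivalent to the Klein cubic threefold, whose intermediate Jacobian is a $5$--dimensional abelian variety with complex multiplication by $\QQ(\zeta_{11})$, so that $\mathfrak h(D)$ is of abelian type. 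Decomposing $\mathfrak h(Y)$ into eigenmotives for the order--three covering automorphism $X_0\mapsto\zeta X_0$, and using that the motive of a cyclic cover of a smooth variety branched along a smooth divisor lies in the thick tensor subcategory generated by the motives of the base and of the branch locus (this is where finite--dimensionality of $\mathfrak h(D)$, via Kimura's nilpotence theorem, is really used), one concludes that $\mathfrak h(Y)$ is again of abelian type; together with the Hodge conjecture for the associated CM abelian variety this yields $H^4_{tr}(Y)$ of rank two and of CM type, so that $Y$ is very special.

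It remains to recall the mechanism of theorem \ref{main} in this rank--two situation. For any smooth cubic fourfold $Y$ there is a refined Chow--K\"unneth decomposition $\mathfrak h(Y)=\bigoplus_i \QQ(-i)^{\oplus b_i}\oplus t(Y)$ with $H^\ast(t(Y))=H^4_{tr}(Y)$; an elementary computation with the Tate summands shows $A^3_{hom}(Y)=A^3(t(Y))$, so that for $a,a'\in A^3_{hom}(Y)$ the cycle $a\times a'-a'\times a$ lies in $A^6(\wedge^2 t(Y))$, a direct summand of $\mathfrak h(Y\times Y)$ and hence finite--dimensional. When $H^4_{tr}(Y)$ has rank two, $\wedge^2 H^4_{tr}(Y)=H^\ast(\wedge^2 t(Y))$ is a single line of Hodge type $(4,4)$; since $\mathfrak h(Y\times Y)$ is of abelian type the Hodge conjecture shows this class is algebraic, and by finite--dimensionality (Kimura's isomorphism criterion) one then gets $\wedge^2 t(Y)\cong\QQ(-4)$ as Chow motives. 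Therefore $A^6(\wedge^2 t(Y))=A^6(\QQ(-4))=0$, which is the assertion $a\times a'=a'\times a$ in $A^6(Y\times Y)$. (The same conclusion holds, with more work, whenever $\wedge^2 H^4_{tr}(Y)$ has coniveau $\ge 3$ inside $H^8(Y\times Y)$ --- automatic since $h^{3,1}(H^4_{tr}(Y))=1$ kills the top Hodge pieces of $\wedge^2 H^4_{tr}(Y)$ --- provided one has the generalized Hodge conjecture for $\mathfrak h(Y\times Y)$; this is the general form of the hypothesis of theorem \ref{main}.)

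The main obstacle is exactly the appeal to the Hodge conjecture (equivalently, in the coniveau form, the generalized Hodge conjecture) for the abelian--type motive $\mathfrak h(Y\times Y)$: this is what confines us to very special cubics, where the relevant CM abelian varieties --- products of Fermat curves for the Fermat cubic, the $\QQ(\zeta_{11})$--CM variety for the second --- are ones for which the Hodge conjecture is known. A secondary difficulty, already flagged, is the motivic bookkeeping for the cyclic cover: the eigenmotive decomposition of $\mathfrak h(Y)$ and its comparison with $\mathfrak h(D)$ must be carried out at the level of Chow motives, not merely in cohomology.
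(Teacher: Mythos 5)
Your third paragraph is, in substance, the paper's actual argument: the paper does \emph{not} deduce theorem \ref{main2} from theorem \ref{main}, but from a separate statement (theorem \ref{main1}) whose hypotheses are exactly finite--dimensionality of the motive, $\dim H^4_{tr}(Y)=2$, and the Hodge conjecture for $Y\times Y$, and whose proof is the mechanism you sketch: $\wedge^2 H^4_{tr}(Y)$ is a one--dimensional space of Hodge classes, its generator is algebraic, the projector $\Gamma$ onto $\wedge^2 t(Y)$ differs from a cycle $\gamma$ supported in codimension $4$ by a homologically trivial (hence, by nilpotence, ``harmless'') correspondence, and therefore $\Gamma$ kills $A^6(Y\times Y)$. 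So the core mechanism is right, but you have misattributed it: the proof of theorem \ref{main} is entirely different (it uses $t(Y)\cong t(S)(1)$ for the associated $K3$ surface and a Shioda--Inose reduction to Voisin's result on $0$--cycles of abelian surfaces), and it genuinely needs condition (1) of definition \ref{def}.

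This brings out the genuine gaps in your first two paragraphs. First, you assert that both cubics are very special, but you never address condition (1) of definition \ref{def} (existence of a $K3$ surface $S$ with $F(Y)$ birational to $S^{[2]}$); nothing about finite--dimensionality, CM type, or maximal Picard number implies it --- by \cite{Add} it is a condition on the discriminant of a labelling --- and the paper explicitly states that the author does not know whether these two cubics are very special, which is precisely why theorem \ref{main1} is introduced. Second, ``from the Hodge conjecture one deduces that $Y$ has maximal Picard number'' is a non sequitur: the Hodge conjecture says nothing about the dimension of $H^{2,2}\cap H^4(Y,\QQ)$. The paper obtains $\dim H^4_{tr}(Y)=2$ from explicit computations (Shioda for the Fermat cubic; for the second cubic, \cite{Mon} via the order--$11$ symplectic automorphism of $F(Y)$, or \cite{BNS}). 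Third, for the second cubic you propose to deduce the Hodge conjecture for $Y\times Y$ from ``the Hodge conjecture for the associated CM abelian variety''; the Hodge conjecture for (powers of) CM abelian varieties is not known in the required generality, so this is not an available input. The paper instead argues directly and elementarily: $(H^4_{tr}(Y)\otimes H^4_{tr}(Y))\cap H^{4,4}$ has dimension at most $2$, and $\pi_{tr}$ and $\Gamma_\sigma\circ\pi_{tr}$ (with $\sigma$ the order--$3$ non--symplectic covering automorphism) provide two non--proportional algebraic classes in it. Your observation that the branch locus is the Klein cubic threefold is pleasant but not needed: the paper cites \cite{excubic} for finite--dimensionality of any smooth cubic of the form $X_0^3+f(X_1,\dots,X_5)=0$, and your general claim about motives of cyclic covers lying in the subcategory generated by base and branch locus would itself require proof.
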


 \vskip0.6cm

\begin{convention} In this article, the word {\sl variety\/} will refer to a reduced irreducible scheme of finite type over $\C$. A {\sl subvariety\/} is a (possibly reducible) reduced subscheme which is equidimensional. 

{\bf All Chow groups will be with rational coefficients}: we will denote by $A_j(X)$ the Chow group of $j$--dimensional cycles on $X$ with $\QQ$--coefficients; for $X$ smooth of dimension $n$ the notations $A_j(X)$ and $A^{n-j}(X)$ are used interchangeably. 

The notations $A^j_{hom}(X)$, $A^j_{AJ}(X)$ will be used to indicate the subgroups of homologically trivial, resp. Abel--Jacobi trivial cycles.


We use $H^j(X)$ 
to indicate singular cohomology $H^j(X,\QQ)$.
\end{convention}

\section{Preliminaries}

\subsection{Finite--dimensional motives}

We refer to \cite{Kim}, \cite{An}, \cite{J4}, \cite{MNP} for the definition of finite--dimensional motive. 
An essential property of varieties with finite--dimensional motive is embodied by the nilpotence theorem:

\begin{theorem}[Kimura \cite{Kim}]\label{nilp} Let $X$ be a smooth projective variety of dimension $n$ with finite--dimensional motive. Let $\Gamma\in A^n(X\times X)_{}$ be a correspondence which is numerically trivial. Then there is $N\in\NN$ such that
     \[ \Gamma^{\circ N}=0\ \ \ \ \in A^n(X\times X)_{}\ .\]
\end{theorem}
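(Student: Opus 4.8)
I would recover this along the lines of Kimura's original argument, unwinding what finite--dimensionality buys us at the level of the category $\MM_{rat}$ of Chow motives. Set $M:=\mathfrak h(X)$. Since $X$ has finite--dimensional motive we may write $M=M_+\oplus M_-$ with $M_+$ evenly and $M_-$ oddly finite--dimensional, i.e.\ $\wedge^{p+1}M_+=0$ and $\mathrm{Sym}^{q+1}M_-=0$ for suitable $p,q$. A numerically trivial correspondence $\Gamma\in A^n(X\times X)$ is nothing but an element of the two--sided ideal $\mathcal N:=\ker\big(\mathrm{End}_{\MM_{rat}}(M)\to\mathrm{End}_{num}(\overline M)\big)$, where $\overline M$ is the image of $M$ in the category of numerical motives. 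So the task reduces to showing that $\mathcal N$ is a nil (in fact nilpotent) ideal; then $\Gamma^{\circ N}=0$ for some $N$.

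The main input concerns the evenly finite--dimensional summand $M_+$. For $g\in\mathrm{End}(M_+)$ introduce, as usual in a rigid $\QQ$--linear tensor category, the invariants $\lambda_i(g):=\mathrm{tr}(\wedge^i g)$ and the power sums $s_k(g):=\mathrm{tr}(g^{\circ k})$, both lying in $\mathrm{End}(\mathbf 1)=\QQ$ and related by Newton's identities. The vanishing of $\wedge^{p+1}M_+$ furnishes a Cayley--Hamilton relation expressing $g^{\circ p}$ as a $\QQ$--linear combination of the $\lambda_i(g)\,g^{\circ(p-i)}$ with $i\geq 1$. If $g$ is numerically trivial then $s_k(g)=0$ for every $k\geq 1$: categorical traces are preserved by $\QQ$--linear tensor functors, in particular by the projection $\MM_{rat}\to\MM_{num}$, under which $g$ goes to $0$. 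In characteristic zero Newton's identities then force $\lambda_i(g)=0$ for all $i\geq 1$, so the Cayley--Hamilton relation collapses to $g^{\circ p}=0$, with $p$ independent of $g$. A polarization argument — substituting $g=t_1g_1+\cdots+t_pg_p$ with indeterminates $t_j$, extracting multilinear coefficients, and iterating within the tensor category — upgrades this to the nilpotence of the ideal $\mathcal N(M_+)\subset\mathrm{End}(M_+)$. The oddly finite--dimensional summand $M_-$ is handled by the formally dual argument, with $\mathrm{Sym}$ replacing $\wedge$ (equivalently, by running the above in the tensor category obtained by flipping the sign in the symmetry constraint).

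It remains to assemble the two summands. Writing an endomorphism of $M=M_+\oplus M_-$ as a matrix $\left(\begin{smallmatrix}a&b\\ c&d\end{smallmatrix}\right)$, note that the off--diagonal groups $\mathrm{Hom}(M_+,M_-)$ and $\mathrm{Hom}(M_-,M_+)$ consist entirely of homologically — hence numerically — trivial morphisms, because the realizations of $M_+$ and $M_-$ are concentrated in cohomological degrees of opposite parity while a morphism of motives realizes to a degree--preserving map. Consequently these groups lie inside $\mathcal N$, and their pairwise composites land inside $\mathcal N(M_+)$ and $\mathcal N(M_-)$, already shown to be nilpotent. A Peirce--decomposition bookkeeping — squaring such a matrix feeds $b$ and $c$ back into the nilpotent diagonal ideals — then gives that $\mathcal N=\left(\begin{smallmatrix}\mathcal N(M_+)&\mathrm{Hom}(M_+,M_-)\\ \mathrm{Hom}(M_-,M_+)&\mathcal N(M_-)\end{smallmatrix}\right)$ is itself nilpotent, and applying this to $\Gamma\in\mathcal N$ finishes the proof.

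The step I expect to cost the most effort is the one for $M_+$: setting up the Cayley--Hamilton identity abstractly and, above all, turning the pointwise vanishing $g^{\circ p}=0$ into genuine nilpotence of the whole ideal $\mathcal N(M_+)$ — plain ring theory does not suffice for the latter, and one has to exploit the tensor structure, as Kimura does. Once this and Jannsen's semisimplicity of the category of numerical motives are in hand, the rest of the argument is formal.
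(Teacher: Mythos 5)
The paper does not prove this statement at all---it is quoted verbatim from Kimura's work---and your argument is a faithful reconstruction of Kimura's original proof: splitting $\mathfrak h(X)=M_+\oplus M_-$, deriving the Cayley--Hamilton identity from $\wedge^{p+1}M_+=0$, killing the coefficients via $\mathrm{tr}(g^{\circ k})=0$ and Newton's identities in characteristic zero, upgrading bounded nil to nilpotence of the ideal by a Nagata--Higman-style polarization, and assembling the two summands using that the off-diagonal Homs are homologically trivial for parity reasons. This is correct and is essentially the same route as the cited source (your appeal to semisimplicity of numerical motives is superfluous but harmless).
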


 Actually, the nilpotence property (for all powers of $X$) could serve as an alternative definition of finite--dimensional motive, as shown by Jannsen \cite[Corollary 3.9]{J4}.
Conjecturally, any variety has finite--dimensional motive \cite{Kim}. We are still far from knowing this, but at least there are quite a few non--trivial examples.

\subsection{CK decomposition}

\begin{definition}[Murre \cite{Mur}] Let $X$ be a smooth projective variety of dimension $n$. We say that $X$ has a {\em CK decomposition\/} if there exists a decomposition of the diagonal
   \[ \Delta_X= \pi_0+ \pi_1+\cdots +\pi_{2n}\ \ \ \hbox{in}\ A^n(X\times X)\ ,\]
  such that the $\pi_i$ are mutually orthogonal idempotents and $(\pi_i)_\ast H^\ast(X)= H^i(X)$.
  
  (NB: ``CK decomposition'' is shorthand for ``Chow--K\"unneth decomposition''.)
\end{definition}

\begin{remark} The existence of a CK decomposition for any smooth projective variety is part of Murre's conjectures \cite{Mur}, \cite{J2}. 
\end{remark}

In what follows, we will make use of the following: 

\begin{theorem}[Shen--Vial \cite{SV}]\label{fanomck} Let $Y\subset\PP^5(\C)$ be a smooth cubic fourfold, and let $X:=F(Y)$ be the Fano variety of lines in $Y$. There exists a CK decomposition $\{\pi^X_i\}$ for $X$, and 
  \[ (\pi^X_{2i-j})_\ast A^i(X) = A^i_{(j)}(X)\ ,\]
  where the right--hand side denotes the splitting of the Chow groups defined in terms of the Fourier transform as in \cite[Theorem 2]{SV}. Moreover, we have
  \[ A^i_{(j)}(X)=0\ \ \ \hbox{for\ }j<0\ \hbox{and\ for\ }j>i\ .\]
  
  In case $Y$ is very general, the Fourier decomposition $A^\ast_{(\ast)}(X)$ forms a bigraded ring.    
  \end{theorem}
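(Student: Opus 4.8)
The plan is to transplant Beauville's Fourier--transform machinery for abelian varieties to the hyperk\"ahler fourfold $X=F(Y)$. The first step is to manufacture a distinguished self--correspondence $L\in A^2(X\times X)$ whose cohomology class is the Beauville--Bogomolov class $\mathfrak{B}\in\mathrm{Sym}^2H^2(X)\subset H^4(X\times X)$. One builds $L$ from the incidence correspondence $P=\{(x,\ell):x\in\ell\}\subset Y\times X$: writing $I:=[P]\in A^3(Y\times X)$, the composite $I\circ{}^tI\in A^4(X\times X)$ already represents a multiple of $\mathfrak{B}$ up to classes involving the Pl\"ucker polarization $g\in A^1(X)$ and the Chern class $c:=c_2(S_X)\in A^2(X)$ of the tautological subbundle, so that $L$ can be defined by correcting $I\circ{}^tI$ with an explicit polynomial in $g$ and $c$. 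One then forms, exactly as for abelian varieties, the ``Fourier transform'' $\mathcal{F}:=\exp(L)=\sum_{k\ge 0}\tfrac{1}{k!}L^k\in A^\ast(X\times X)$.

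The technical core --- and the step I expect to be the main obstacle --- is to establish the polynomial identities satisfied by $L$ in the Chow ring $A^\ast(X\times X)$: above all a quadratic relation expressing $L^2$ through $L$, the pulled--back classes $l=p_1^\ast g$, $l'=p_2^\ast g$, the class $c$, and the diagonal $\Delta_X$, from which one extracts a Poincar\'e--duality--type identity $\mathcal{F}\circ\mathcal{F}=\pm\,\iota_\ast\Delta_X$ up to controlled correction terms ($\iota$ the involution swapping the two factors). These relations are not formal; they rest on the explicit geometry of lines on $Y$ --- Voisin's analysis of $A^\ast(F(Y))$, the self--intersection behaviour of the universal line inside $P\times_XP$, and the description of $H^\ast(X)$ by Beauville--Donagi. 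Everything downstream of these identities is comparatively soft, but obtaining them requires real intersection theory on $P\times_XP$ and on $Y$.

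Granting the relations, one reads off the CK decomposition and the bigrading as follows. The operators on $A^\ast(X)$ assembled from $L$ (compositions of $L_\ast$, $L^\ast$ and intersections with $g$ and $c$) obey an $\mathfrak{sl}_2$--type calculus forced by the quadratic relation; decomposing each $A^i(X)$ into simultaneous eigenspaces of these operators yields the pieces $A^i_{(j)}(X)$, and the idempotents $\pi^X_{2i-j}$ are obtained by splitting $\Delta_X$ through $\mathcal{F}$ and the Lefschetz operator $g$. Mutual orthogonality and idempotency of the $\pi^X_i$ follow from the relations, while $(\pi^X_i)_\ast H^\ast(X)=H^i(X)$ holds because $L$ acts on cohomology as the Beauville--Bogomolov pairing, whose Lefschetz--type structure on $H^\ast(X)$ produces precisely this K\"unneth grading. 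The vanishing $A^i_{(j)}(X)=0$ for $j$ outside $[0,i]$ is then deduced from the explicit shape of the $L$--operators: for $j<0$ no such eigenvalue can occur (the range of eigenvalues is dictated by $\dim X=4$), and for $j>i$ one runs a decomposition--of--the--diagonal argument in the style of Bloch--Srinivas, just as $A^i_{(j)}$ of an abelian variety vanishes outside Beauville's range.

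Finally, the bigraded--ring statement --- which genuinely needs $Y$ to be very general --- amounts to checking $A^i_{(j)}(X)\cdot A^{i'}_{(j')}(X)\subseteq A^{i+i'}_{(j+j')}(X)$. One reduces this to a short list of verifications: that $g\in A^1_{(0)}(X)$ and $c\in A^2_{(0)}(X)$ and products among them stay in the $(0)$--part, and --- the only substantive point --- that multiplying by the symplectic piece $A^2_{(2)}(X)$ shifts the grading by exactly $2$, i.e. $A^1_{(0)}(X)\cdot A^2_{(2)}(X)\subseteq A^3_{(2)}(X)$ and $A^2_{(2)}(X)\cdot A^2_{(2)}(X)\subseteq A^4_{(4)}(X)$. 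These last inclusions are Beauville--Voisin--type statements for $F(Y)$; one proves them by transporting the question, via the incidence correspondence $I$, to relations among $0$--cycles on $Y$ and on the universal line, where the very--general hypothesis --- through a spread/Bloch--Srinivas argument --- forces the required vanishings in cohomology, hence, by the $L$--calculus already in place, in the corresponding Chow--theoretic pieces.
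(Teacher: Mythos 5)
The paper does not actually prove this statement: it is quoted from Shen--Vial's memoir, and the ``proof'' in the text consists of pointing to \cite[Theorem 3.3]{SV} (existence of the CK decomposition, its agreement with the Fourier decomposition, and Murre's conjecture B, which gives the vanishing statement) together with \cite[Theorem 3]{SV} for the bigraded ring structure when $Y$ is very general. Your proposal instead sketches the internal architecture of that memoir, and as a roadmap it is broadly faithful: the cycle $L$ lifting the Beauville--Bogomolov class is indeed built from $I\circ{}^t I$ corrected by a polynomial in $g$ and $c=c_2(\mathcal S_X)$, the Fourier transform is $e^L$, and everything hinges on Chow--theoretic identities satisfied by $L$, above all the quadratic relation for $L^2$. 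But that is exactly where your text stops being a proof: you explicitly defer the ``technical core'', and that core is the content of a very long stretch of intersection theory on $P\times_X P$, on secant lines, and on the locus of lines of second type, none of which you supply or reduce to anything simpler. What you have is an accurate table of contents for \cite{SV}, not an argument; since the statement is a cited external theorem, that is not fatal for the present paper, but it should not be mistaken for a proof.

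Two points in the sketch are also misaligned with how the result is actually obtained. First, the vanishing $A^i_{(j)}(X)=0$ for $j<0$ and $j>i$ comes from the fact that the CK projectors $\{\pi^X_i\}$ are shown in \cite[Theorem 3.3]{SV} to satisfy Murre's conjecture B (this is exactly how the paper's own proof phrases it); your eigenvalue count for the $L$--operators plus a Bloch--Srinivas argument is not by itself a substitute. Second, and more substantively, you identify the products involving $A^2_{(2)}(X)$ as ``the only substantive point'' requiring the very--general hypothesis, and dismiss the $(0)$--graded products as routine. This is backwards: the surjection $A^2_{(2)}(X)\otimes A^2_{(2)}(X)\twoheadrightarrow A^4_{(4)}(X)$ holds for \emph{every} smooth cubic (it is \cite[Proposition 20.3]{SV}, quoted as theorem \ref{chowringfano}(\rom2) above), whereas the inclusion $A^2_{(0)}(X)\cdot A^2_{(0)}(X)\subset A^4_{(0)}(X)$ is precisely one of the statements that remains open for general $Y$ and is only established for very general $Y$ (cf.\ remark \ref{pity}). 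So the place where the very--general hypothesis genuinely enters is not where you locate it.
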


\begin{proof} (A remark on notation: what we denote $A^i_{(j)}(X)$ is denoted $CH^i(X)_j$ in \cite{SV}.)

The existence of a CK decomposition $\{\pi^X_i\}$ is \cite[Theorem 3.3]{SV}, combined with the results in \cite[Section 3]{SV} to ensure that the hypotheses of \cite[Theorem 3.3]{SV} are satisfied. According to \cite[Theorem 3.3]{SV}, the given CK decomposition agrees with the Fourier decomposition of the Chow groups. The ``moreover'' part is because the $\{\pi^X_i\}$ are shown to satisfy Murre's conjecture B \cite[Theorem 3.3]{SV}.

The statement for very general cubics is \cite[Theorem 3]{SV}.
    \end{proof}

\begin{remark}\label{pity} Unfortunately, it is not yet known whether the Fourier decomposition of \cite{SV} induces a bigraded ring structure on the Chow ring for {\em all\/} Fano varieties of smooth cubic fourfolds. (That is, it is not known whether the CK decomposition of theorem \ref{fanomck} is a {\em weak multiplicative\/} CK decomposition, in the sense of \cite{SV}.) For one thing, it has not yet been proven that $A^2_{(0)}(X)\cdot A^2_{(0)}(X)\subset A^4_{(0)}(X)$ (cf. \cite[Section 22.3]{SV} for discussion).
\end{remark}

\subsection{Multiplicative structure}

Let $X$ be the Fano variety of lines on a smooth cubic fourfold. As we have seen (theorem \ref{fanomck}), the Chow ring of $X$ splits into pieces $A^i_{(j)}(X)$.
The magnum opus \cite{SV} contains a detailed analysis of the multiplicative behaviour of these pieces. Here are the relevant results we will be needing:

\begin{theorem}[Shen--Vial \cite{SV}]\label{chowringfano} Let $Y\subset\PP^5(\C)$ be a smooth cubic fourfold, and let $X:=F(Y)$ be the Fano variety of lines in $Y$. 

\noindent
(\rom1) There exists $\ell\in A^2_{(0)}(X)$ such that intersecting with $\ell$ induces an isomorphism
  \[ \cdot\ell\colon\ \ \ A^2_{(2)}(X)\ \xrightarrow{\cong}\ A^4_{(2)}(X)\ .\]

\noindent
(\rom2) Intersection product induces a surjection
  \[ A^2_{(2)}(X)\otimes A^2_{(2)}(X)\ \twoheadrightarrow\ A^4_{(4)}(X)\ .\]
\end{theorem}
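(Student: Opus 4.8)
These are among the results of Shen--Vial \cite{SV}, so my ``proof'' would amount to pointing to the relevant statements there; let me nonetheless indicate the conceptual content and where the weight of the argument sits. Recall the geometric input: $X=F(Y)$ carries the Pl\"ucker polarization $g\in A^1(X)$ and the second Chern class $c\in A^2(X)$ of the tautological rank-two subbundle, and $\ell\in A^2(X)$ denotes the specific $\QQ$--linear combination of $g^2$ and $c$ used in \cite{SV} to construct the Fourier transform. One has $\ell\in A^2_{(0)}(X)$, and the (partial) multiplicativity results of \cite{SV} provide the inclusions $\ell\cdot A^2_{(2)}(X)\subseteq A^4_{(2)}(X)$ and $A^2_{(2)}(X)\cdot A^2_{(2)}(X)\subseteq A^4_{(0)}(X)\oplus A^4_{(2)}(X)\oplus A^4_{(4)}(X)$, so that both maps in the statement make sense to begin with.

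For part~(\rom1): cup product with the image of $\ell$ in $H^4(X)$ induces an isomorphism $H^2(X)\xrightarrow{\ \sim\ }H^6(X)$ --- a Hard--Lefschetz-type statement, verified in \cite{SV}. To lift it to Chow groups I would use the explicit description in \cite{SV} of the Fourier transform: it furnishes a correspondence $\Lambda$ on $X\times X$ whose induced map $\Lambda_\ast\colon A^4(X)\to A^2(X)$ carries $A^4_{(2)}(X)$ into $A^2_{(2)}(X)$ and is a two--sided inverse of $\cdot\ell$ there, whence $\cdot\ell\colon A^2_{(2)}(X)\xrightarrow{\ \cong\ }A^4_{(2)}(X)$. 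The point is that $\Lambda$ is produced concretely from the intersection theory of lines on $Y$ (via the projectors $\pi^X_2,\pi^X_6$ of theorem~\ref{fanomck} and the Fourier kernel), so that no finite--dimensionality hypothesis on the motive of $X$ is needed to conclude.

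For part~(\rom2): the piece $A^4_{(4)}(X)$ is the image of $\pi^X_4$ acting on $A^4(X)$, and corresponds, in the Bloch--Beilinson heuristic, to $H^4(X)$ --- which for a fourfold of $K3^{[2]}$ deformation type equals $\operatorname{Sym}^2 H^2(X)$, so that $H^2(X)\otimes H^2(X)\to H^4(X)$ is already onto on cohomology. At the level of Chow groups, the surjectivity is obtained by combining the multiplicativity inclusion above with an identity on $X\times X$ expressing the projector $\pi^X_4$, modulo correspondences factoring through $A^4_{(0)}(X)\oplus A^4_{(2)}(X)$, through the small diagonal $\delta_X\colon X\hookrightarrow X\times X$ and the projector $\pi^X_2\otimes\pi^X_2$ on $X\times X$; unwound, this is exactly the assertion that every element of $A^4_{(4)}(X)$ is a sum of $(4)$--components of products $\alpha\cdot\alpha'$ with $\alpha,\alpha'\in A^2_{(2)}(X)$.

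The real work --- and the main obstacle, which I would import wholesale from \cite{SV} rather than reprove --- lies in establishing the two correspondence identities just used: the existence of the inverse $\Lambda$ to $\cdot\ell$ on the $H^2$--part together with its compatibility with the Fourier grading, and the decomposition of $\pi^X_4$ through the small diagonal and $\pi^X_2\otimes\pi^X_2$. Both rest on the explicit relations satisfied by $g$, $c$, the universal line in $X\times Y$ and the second incidence correspondence on $X\times X$, and occupy a substantial portion of \cite{SV}; everything else above is a formal rearrangement of those inputs.
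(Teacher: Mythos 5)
Your proposal is correct and takes essentially the same route as the paper, which simply cites \cite{SV} for both parts (Theorem 4 of \cite{SV} for (\rom1) and Proposition 20.3 of \cite{SV} for (\rom2)). Your additional sketch of the mechanism behind those results (the class $\ell$ built from $g^2$ and $c$, the Fourier correspondence inverting $\cdot\ell$ on the $(2)$--graded piece, and the decomposition of $\pi^X_4$ via the small diagonal) is consistent with what Shen--Vial actually do, but since you explicitly import the substantive correspondence identities from \cite{SV}, the two proofs are the same in substance.
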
 
     
 \begin{proof} Statement (\rom1) is \cite[Theorem 4]{SV}. Statement (\rom2) is \cite[Proposition 20.3]{SV}.
  \end{proof}

\subsection{The two conjectures are equivalent}

\begin{proposition}\label{equiv} Let $Y\subset \PP^5(\C)$ be a smooth cubic fourfold, and let $X=F(Y)$ be the Fano variety of lines in $Y$. Conjecture \ref{conjcube} holds for $Y$ if and only if conjecture \ref{conjHK} holds for $X$.
\end{proposition}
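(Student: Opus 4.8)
The plan is to use the close relationship, worked out in detail by Shen–Vial, between the Chow motive of a smooth cubic fourfold $Y$ and that of its Fano variety of lines $X = F(Y)$. Concretely, $X$ carries a tautological codimension-$2$ cycle $P \in A^2(X\times Y)$ (the universal line), inducing correspondences $P_* \colon A^*(Y) \to A^{*}(X)$ and ${}^tP_* \colon A^*(X) \to A^*(Y)$, and the heart of \cite{SV} is an analysis of the composites $P \circ {}^tP$ and ${}^tP \circ P$. The key structural fact I would extract from \cite{SV} is that, at the level of the transcendental motives, $P$ induces an isomorphism (up to a Tate twist) between the piece of $A^3_{hom}(Y)$ carrying the "Calabi–Yau-like" Hodge structure $H^{3,1}\oplus H^{1,3}$ and the piece $A^2_{(2)}(X)$ of $A^2(X)$ — indeed $A^2_{(2)}(X) \cong A^3_{hom}(Y)$ via $P_*$, with inverse (a multiple of) ${}^tP_*$. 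This is the bridge that translates the two sides.

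The steps I would carry out are as follows. First, recall from Theorem \ref{fanomck} that the relevant primitive $0$-cycles on $X$ live in $A^4_{(4)}(X)$, and from Theorem \ref{chowringfano}(ii) that $A^4_{(4)}(X)$ is spanned by products $x \cdot x'$ with $x, x' \in A^2_{(2)}(X)$; so it suffices to prove Conjecture \ref{conjHK} for $X$ on such decomposable classes, and moreover by bilinearity on classes of the form $x\times x' - x'\times x$ with $x,x'$ ranging over (images of) transcendental cycles. Second, using $A^2_{(2)}(X) \cong A^3_{hom}(Y)$, I would rephrase the identity $(x\times x') \cdot \ell \times \ell$-type expressions on $X\times X$ in terms of $a \times a'$ on $Y\times Y$ with $a, a' \in A^3_{hom}(Y)$: the point is that the correspondence $P\times P \in A^*(X\times X\times Y\times Y)$ (suitably composed with the class $\ell$ of Theorem \ref{chowringfano}(i) to fix degrees) carries the cycle $a\times a' - (-1)\, a'\times a$ on $Y\times Y$ to $x\times x' - x'\times x$ on $X\times X$, and conversely, up to a nonzero rational multiple. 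Third, I would check that this correspondence is invertible in the appropriate sense on the relevant pieces — i.e. that vanishing on one side forces vanishing on the other — which again is a formal consequence of the fact that $P_*$ and ${}^tP_*$ are mutually inverse (up to scalar) between $A^2_{(2)}(X)$ and $A^3_{hom}(Y)$, combined with Theorem \ref{chowringfano} to handle the codimension-$4$ pieces on $X\times X$ versus codimension-$6$ on $Y\times Y$. Running the argument in both directions gives the "if and only if".

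The main obstacle I anticipate is bookkeeping rather than conceptual: one must be careful that the decomposition $A^4(X\times X) = \bigoplus A^4_{(j)}(X\times X)$ interacts correctly with the external products, i.e. that $x\times x'$ for $x,x'\in A^2_{(2)}(X)$ lands in the piece of $A^4(X\times X)$ that the correspondence $P\times P$ actually sees, and that the "error terms" coming from the non-transcendental summands of the motive of $Y$ (the algebraic classes in $H^4(Y)$, contributing to $A^2_{(0)}(X)$) are genuinely killed — here one uses that such classes are represented by cycles supported on a divisor and contribute symmetrically, so they drop out of $x\times x' - x'\times x$ automatically. A secondary subtlety is the sign: Conjecture \ref{conjcube} has no sign (reflecting odd cohomological degree $3$, where $(-1)^3 = -1$ but the relevant class is $a\times a' - a'\times a$ already antisymmetrized... wait, for degree-$n$ cycles on an $n$-fold one wants $a\times a' = (-1)^n a'\times a$), while Conjecture \ref{conjHK} is for $0$-cycles on the $4m$-fold $X$ and is symmetric; tracking how the shift by the tautological cycle $P$ (which changes parity) reconciles these two statements is the one place where I would be most careful, but the Shen–Vial formalism is set up precisely so that this works out.
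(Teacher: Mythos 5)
Your proposal is correct and follows essentially the same route as the paper: transfer between $A^3_{hom}(Y)$ and $A^2_{(2)}(X)$ via the universal line correspondence $P$ and its transpose (the paper uses the surjectivity of $P_\ast\colon A^4_{hom}(X)\to A^3_{hom}(Y)$ due to Paranjape together with $A^2_{(2)}(X)=({}^tP)_\ast A^3_{hom}(Y)$, rather than a full isomorphism), then propagate to $A^4_{(2)}(X)$ by multiplication with $\ell$ (Theorem \ref{chowringfano}(\rom1)) and to $A^4_{(4)}(X)$ by decomposability (Theorem \ref{chowringfano}(\rom2)). The sign and ``error term'' worries you raise do not materialize, since both conjectures are stated as the vanishing of $a\times a'-a'\times a$ and the argument only ever pushes such differences forward under $P\times P$ or ${}^tP\times{}^tP$.
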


\begin{proof} Let
  \[ \begin{array}[c]{ccc}
       P &\to& Y\\
       \downarrow&&\\
       X&&\\
       \end{array}\]
       denote the universal family of lines. For a point $s\in X$, let $\ell_s\subset Y$ be the corresponding line. Let
       \[ I := \bigl\{  (s,t)\in X\times X\ \vert\ \ell_s\cap \ell_t\not=\emptyset\bigr\}  \]
       denote the incidence correspondence. Viewing $P$ as a correspondence $P\in A^3(X\times Y)$, there is the relation
       \begin{equation}\label{inci} I={}^t P\circ P\ \ \ \hbox{in}\ A^2(X\times X) \end{equation}
       \cite[Lemma 17.2]{SV}.
       
Let us suppose conjecture \ref{conjcube} holds for $Y$. We observe that there is equality
  \[ A^2_{(2)}(X)= I_\ast A^4_{hom}(X) \]
  \cite[Proof of Proposition 21.10]{SV}. Using the relation (\ref{inci}), this means that
  \[  A^2_{(2)}(X)= ({}^t P)_\ast  P_\ast A^4_{hom}(X)\ . \]
  But $P_\ast\colon A^4_{hom}(X)\to A^3_{hom}(Y)$ is known to be surjective \cite{Par}, and so
    \[  A^2_{(2)}(X) = ({}^t P)_\ast A^3_{hom}(Y)\ .\] 
Let $b, b^\prime\in A^2_{(2)}(X)$. We can write $b=({}^t P)_\ast(a)$ and $b^\prime=({}^t P)_\ast(a)$, for some $a, a^\prime\in A^3_{hom}(Y)$.
But then
  \[ b\times b^\prime - b^\prime\times b = ({}^t P\times {}^t P)_\ast (a\times a^\prime - a^\prime\times a) =0\ \ \ \hbox{in}\ A^4(X\times X)\ .\]
  Using theorem \ref{chowringfano}(\rom1), this implies conjecture \ref{conjHK} is true for $A^4_{(2)}(X)$:
  given $a,a^\prime\in A^4_{(2)}(X)$, there exist $b,b^\prime\in A^2_{(2)}(X)$ such that $a=b\cdot\ell$ and $a^\prime=b^\prime\cdot\ell$. Thus, we find
  \[  a\times a^\prime = b\cdot\ell\times b^\prime\cdot\ell = (b\times b^\prime)\cdot (\ell\times\ell) = (b^\prime\times b)\cdot(\ell\times\ell) = a^\prime\times a\ \ \ \hbox{in}\ A^8(X\times X)\ .\]
   Using theorem \ref{chowringfano}(\rom2), one checks conjecture \ref{conjHK} is also true for $A^4_{(4)}(X)$.
  
  Next, let us suppose conjecture \ref{conjHK} holds for $X$. As mentioned above, there is a surjection
  \[ P_\ast\colon \ \ \ A^4_{hom}(X)\to A^3_{hom}(Y)  \]
  \cite{Par}. Moreover, one has $A^4_{hom}(X)=A^4_{(2)}(X)\oplus A^4_{(4)}(X)$. Since 
   \[ A^4_{(4)}(X)= \ker\bigl( A^4(X)\ \xrightarrow{P_\ast}\ A^3(Y) \]
   \cite[Theorem 20.5]{SV}, the map
   \[ P_\ast\colon \ \ \ A^4_{(2)}(X)\to A^3_{hom}(Y)  \]
   is still a surjection. Hence, one can deduce conjecture \ref{conjcube} for $Y$ from the truth of conjecture \ref{conjHK} for $A^4_{(2)}(X)$.
\end{proof}

\subsection{Very special cubics}

\begin{definition}\label{def} Let $Y\subset\PP^5(\C)$ be a smooth cubic fourfold. We say that $Y$ is {\em very special\/} if

\noindent
(1) there exists a $K3$ surface $S$ such that the Fano variety $F(Y)$ (of lines contained in $Y$) is birational to the Hilbert scheme $S^{[2]}$, and

\noindent
(2) the dimension of $N^2 H^4(Y)$ is $\ge 20$ (or equivalently, the Picard number $\rho(S)$ is $\ge 19$).
\end{definition}

\begin{notation} As in \cite{Has} and \cite{Has2}, we will write $\Cc$ for the $20$--dimensional moduli space of smooth cubic fourfolds, and $\Cc_d$ for the divisor parametrizing special cubics admitting a labelling of discriminant $d$.
\end{notation}

The following shows there are quite many cubics that are very special:

\begin{theorem}[Addington \cite{Add}] Let $d$ be an integer of the form $d=2(n^2+n+1)/a^2$, where $n>1$ and $a>0$ are integers. Then the very special cubic fourfolds of discriminant $d$ form a union of curves that lies analytically dense in $\Cc_d$.
\end{theorem}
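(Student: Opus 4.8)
The plan is to deduce the statement from two ingredients. The first, which I would invoke wholesale --- it is Addington's birationality criterion \cite{Add} --- is that when $d$ has the stated shape, \emph{every} cubic fourfold $Y\in\Cc_d$ has Fano variety of lines birational to $S^{[2]}$ for the K3 surface $S$ associated to it; granting this, the very special cubics of discriminant $d$ are exactly those $Y\in\Cc_d$ for which in addition $\rho(S)\ge 19$. The second ingredient, which I would prove directly on the period domain, is that this last condition cuts out an analytically dense union of curves in $\Cc_d$.

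\emph{On the first ingredient.} For $Y\in\Cc_d$ the variety $F(Y)$ is hyperk\"ahler of $K3^{[2]}$--type, so $H^2(F(Y),\ZZ)$ with its Beauville--Bogomolov form is abstractly $U^{3}\oplus E_8(-1)^2\oplus\langle -2\rangle$, while membership in $\Cc_d$ forces $\mathrm{NS}(F(Y))$ to contain a rank--two sublattice $\Lambda_d$ whose isometry class depends only on $d$. One checks that the Diophantine shape $d=2(n^2+n+1)/a^2$ is precisely the condition for $\Lambda_d$ to contain a primitive $(-2)$--class $\delta$ of divisibility one in $H^2(F(Y),\ZZ)$, equivalently with orthogonal complement $\delta^{\perp}$ isometric to the K3 lattice $U^{3}\oplus E_8(-1)^2$. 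Being an element of $\mathrm{NS}(F(Y))$, $\delta$ is a Hodge class, so the Hodge structure induced on $\delta^{\perp}$ is of K3 type with a positive algebraic class, and hence --- by surjectivity of the period map for K3 surfaces --- is that of a polarized K3 surface $S$; then $H^2(F(Y),\ZZ)\cong H^2(S^{[2]},\ZZ)$ Hodge--isometrically, and, after checking (as Addington does) that this isometry is a parallel--transport operator, the birational Torelli theorem for hyperk\"ahler manifolds gives that $F(Y)$ is birational to $S^{[2]}$. This holds for \emph{all} $Y\in\Cc_d$, since every input depends only on $d$; special members of $\Cc_d$ merely carry extra algebraic classes, i.e. have larger $\rho(S)$.

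\emph{On the second ingredient.} By the global Torelli theorem for cubic fourfolds, together with Laza's and Looijenga's description of the image of the period map, $\Cc_d$ is a Zariski--open subset --- the complement of finitely many Heegner divisors --- of $\Gamma_d\backslash\Dd_d$, where $\Dd_d$ is the $19$--dimensional Hermitian symmetric domain of type $\mathrm{IV}$ attached to the lattice $L_d:=K_d^{\perp}\subset H^4(Y,\ZZ)$ of signature $(2,19)$ ($K_d$ being the discriminant--$d$ polarizing lattice), and $\Gamma_d\subset\mathrm{O}(L_d)$ is the arithmetic monodromy group. For $Y$ with period point $x$, let $T(Y)\subseteq L_d$ be the smallest primitive sublattice with $H^{3,1}(Y)\subset T(Y)\otimes\C$; via the isometry of the first ingredient one has $\rank T(Y)=\rank T(S)=22-\rho(S)$, so the condition $\rho(S)\ge 19$ is the same as $\rank T(Y)\le 3$ (which is Definition \ref{def}(2)). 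Hence the very special locus is exactly the image in $\Cc_d$ of $\bigcup_{N}\{x\in\Dd_d : H^{3,1}(x)\subset N\otimes\C\}$, the union being over the countably many rank--three primitive sublattices $N\subset L_d$ of signature $(2,1)$, and each member of this union is a one--dimensional sub--domain of type $\mathrm{IV}$, i.e. a modular curve. Now the CM points of $\Gamma_d\backslash\Dd_d$ --- those $x$ whose period line lies in $R\otimes\C$ for a positive definite rank--two sublattice $R\subset L_d$, equivalently the cubics whose associated K3 is singular --- are analytically dense, because positive definite rational $2$--planes are dense among the positive $2$--planes in $(L_d)_{\RR}$; this is the classical density of singular K3 surfaces in moduli. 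Since $\Cc_d$ is open in $\Gamma_d\backslash\Dd_d$, it contains a dense set of CM points, and each such point has $\rank T(Y)=2$, hence lies on the modular curve attached to any rank--three primitive overlattice of its transcendental lattice inside $L_d$ (of signature $(2,1)$). Therefore the very special locus, which consists exactly of these modular curves intersected with $\Cc_d$ and contains the dense set of CM points just exhibited, is a countable union of curves lying analytically dense in $\Cc_d$, as asserted.

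The main obstacle is the first ingredient: matching the Diophantine shape $d=2(n^2+n+1)/a^2$ with the existence of a divisibility--one $(-2)$--class in $\Lambda_d$, and upgrading the resulting Hodge isometry to a genuine birational equivalence between $F(Y)$ and $S^{[2]}$. This rests on the structure theory of $K3^{[2]}$--type manifolds --- the birational Torelli theorem and the determination of their monodromy groups --- which is why I would cite it from \cite{Add} rather than reprove it. The second ingredient is routine; the one point worth a remark is that replacing the Shimura variety $\Gamma_d\backslash\Dd_d$ by its open subset $\Cc_d$ is harmless, since any nonempty open subset of $\Dd_d$ already contains CM points.
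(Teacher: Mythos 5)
Your proposal is correct and shares its skeleton with the paper's proof: both invoke Addington's criterion wholesale to get $F(Y)\thicksim_{\rm birat}S^{[2]}$ for \emph{every} $Y\in\Cc_d$ when $d$ has the stated shape, and both reduce ``very special'' to the single condition $\rho(S)\ge 19$ via the Hodge isometry $H^4_{tr}(Y)\cong H^2_{tr}(S)$ (your $\rank T(Y)=22-\rho(S)$ is the same computation). Where you genuinely diverge is in the density step. The paper transfers the whole problem to the $K3$ side: it notes that the arithmetic condition on $d$ is exactly admissibility in Hassett's sense, invokes Hassett's theorem that $\Cc_d$ is irreducible and birational to ${\mathcal N}_d$ or to its quotient by an involution, and then simply quotes the classical fact that $\rho\ge 19$ cuts out a dense union of curves in ${\mathcal N}_d$. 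You instead stay on the cubic side, realize $\Cc_d$ as an open subset of $\Gamma_d\backslash\Dd_d$ via Torelli and the Laza--Looijenga description of the period map's image, and reprove the density from scratch by the CM-point argument (density of rational positive-definite $2$-planes). Your route is more self-contained on the density question but needs heavier input on the Torelli side; the paper's route is shorter because Hassett's birational identification lets it borrow the $K3$ statement verbatim, and it makes visible where the admissibility of $d$ enters beyond Addington's theorem. Both arguments are sound; the only points in yours I would tighten are the passing claims that $\Cc_d$ is the complement of \emph{finitely many Heegner divisors} (it is the complement of two explicit divisors, but this deserves a citation) and that every rank-$3$ primitive sublattice $N$ containing the period plane has signature $(2,1)$ (you should exclude the degenerate case, which is immediate but worth a word).
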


\begin{proof} Let $Y\in \Cc_d$.
Addington has proven \cite[Theorem 2]{Add} that there exists an associated $K3$ surface $S$ of degree $d$ such that
  \[ F(Y) \ \thicksim_{\rm birat}\ S^{[2]}\ ,\]
i.e. $Y$ satisfies condition (1) of definition \ref{def}. There are natural isomorphisms
  \[ H^4_{tr}(Y)\cong H^2_{tr}(F(Y))\cong H^2_{tr}(S^{[2]})\cong H^2_{tr}(S) \ .\]
  (The first follows from the Abel--Jacobi isomorphism established in \cite{BD}, the second is because $F(Y)$ and $S^{[2]}$ are birational, and the last follows from \cite[Proposition 6]{Beau1}.)
  This shows that the cubic $Y$ satisfies condition (2) of definition \ref{def} if and only if the associated $K3$ surface $S$ has Picard number $\ge 19$.

$K3$ surfaces of Picard number $\ge 19$ form a union of curves that lies analytically dense in the moduli space ${\mathcal N}_d$ of degree $d$ polarized $K3$ surfaces. The condition on $d$ implies that $d$ is {\em admissible\/}, in the sense of \cite[Definition 22]{Has2} (this means that $d$ satisfies condition (**) of \cite[Introduction]{Add}). It follows from Hassett's work \cite{Has}, \cite[Corollary 25]{Has2} that $\Cc_d$ is irreducible, and either birational to ${\mathcal N}_d$ or birational to the quotient of ${\mathcal N}_d$ under an involution. Either way, this implies that very special cubic fourfolds form a union of curves that lies analytically dense in $\Cc_d$.
\end{proof}

\begin{remark} The class of very special cubic fourfolds (as defined in definition \ref{def}) is less restrictive than the class of cubic fourfolds studied by Hulek--Kloosterman \cite[Corollaries 4.14 and 4.15]{HK}. (Indeed, in \cite{HK}, the authors ask for an isomorphism $F(Y)\cong S^{[2]}$ in (1), and for an equality $\dim N^2 H^4(Y)=21$ in (2).) The Hulek--Kloosterman cubics form discrete analytically dense subsets inside certain $\Cc_d$.
\end{remark}

\begin{remark} Condition (1) of definition \ref{def} is studied in \cite{GS}, where it is called {\em decomposability\/} of $F(Y)$. It is known that condition (1) is strictly more stringent than the condition of ``having an associated $K3$ surface'' in the sense of \cite{Has}, cf. \cite{Add} and \cite[Example 31]{Has2}.
\end{remark}

While we will not be needing this here, we mention in passing the following result:

\begin{proposition}\label{fd} A very special cubic has finite--dimensional motive.
\end{proposition}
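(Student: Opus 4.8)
The plan is to reduce the claim to known finite-dimensionality results for $K3$ surfaces and Fano varieties of lines. Recall that by Kimura's work \cite{Kim}, the class of varieties with finite-dimensional motive is closed under products, sums, tensor operations, and passing to direct summands (in the category of Chow motives); it is also a birational invariant for surfaces and, more generally, it is known that if $X$ and $X'$ are birational smooth projective varieties of dimension $\le 3$ then $X$ has finite-dimensional motive iff $X'$ does, and for hyperk\"ahler varieties birational equivalence induces an isomorphism of Chow motives (since birational hyperk\"ahler varieties are derived equivalent, or more directly since they differ by a sequence of Mukai flops which preserve the motive). So the first step is: since $Y$ is very special, $F(Y)$ is birational to $S^{[2]}$ for some $K3$ surface $S$, hence (birational hyperk\"ahler fourfolds having isomorphic Chow motives, cf. \cite{Rie} or the Mukai-flop argument) the motive of $F(Y)$ is isomorphic to the motive of $S^{[2]}$.

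Next I would invoke that $S^{[2]}$ has finite-dimensional motive whenever $S$ does. This is because the motive of the Hilbert scheme $S^{[2]}$ is a direct summand of the motive of $S\times S$ blown up along the diagonal (via the standard Hilbert–Chow resolution $S^{[2]} = \widetilde{S\times S}/\mathfrak{S}_2$, or de Cataldo–Migliorini's decomposition \cite{dCM}), and blow-ups along smooth centers have motives built from the motive of the ambient variety and of the center by sums and Tate twists. Since $S$ has finite-dimensional motive (any $K3$ surface does, by Kimura–Vial-type results — in fact one only needs this for the $S$ arising here, but it holds for all $K3$ surfaces? no: it is known for $K3$ surfaces with finite-dimensional motive, e.g. Kummer type, or those dominated by a product of curves), the motive $h(S\times S) = h(S)\otimes h(S)$ is finite-dimensional, hence so is $h(S^{[2]})$, and therefore so is $h(F(Y))$.

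Finally I would pass from $F(Y)$ back to $Y$. By the results of Beauville–Donagi \cite{BD}, the motive of $Y$ (or at least of the relevant cubic fourfold) is closely tied to that of $F(Y)$: there is a relation $h(F(Y))$ contains $h(Y)(-?)$ as a direct summand up to twists, coming from the incidence correspondence $P\in A^3(X\times Y)$ and the identity $I = {}^tP\circ P$ together with Shen–Vial's analysis; more precisely, the correspondence $\frac{1}{?}\,{}^tP\circ P$-style projectors exhibit $h(Y)$ as a summand of $h(F(Y))\otimes(\text{curves})$ or directly of $h(F(Y))$. In any case, since finite-dimensionality descends to direct summands, $Y$ has finite-dimensional motive.

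The main obstacle here is the very first point on which everything rests: one needs that the $K3$ surface $S$ associated to a \emph{very special} cubic actually has finite-dimensional motive. This is not automatic for an arbitrary $K3$ surface — finite-dimensionality of the motive of $K3$ surfaces is open in general — but for $S$ of Picard number $\ge 19$ one is in a much better situation: such $S$ has a Shioda–Inose structure, so $S$ is (after finite correspondence, or rationally dominated by) a Kummer surface of a product abelian surface, equivalently the transcendental motive of $S$ is a direct summand of $h^1(A)\otimes h^1(A')$ for elliptic curves (or an abelian surface) $A, A'$; these are finite-dimensional (abelian varieties have finite-dimensional motive by Kimura, Shermenev). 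Hence $S$ has finite-dimensional motive, which unlocks the chain above. The delicate step is therefore invoking the Shioda–Inose structure correctly and checking it suffices to conclude finite-dimensionality — this is where I would be most careful, but it is known and standard.
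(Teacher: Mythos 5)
Your endgame is the same as the paper's: reduce to the associated $K3$ surface $S$, note that $\rho(S)\ge 19$ forces a Shioda--Inose structure, and conclude finite--dimensionality from abelian surfaces. That part is sound and is exactly what the paper does (its lemma on $K3$ surfaces of Picard number $\ge 19$). The problem is the link between the cubic $Y$ and the surface $S$. The paper gets this in one step by citing Pedrini's isomorphism $t(Y)\cong t(S)(1)$ in $\MM_{\rm rat}$ (valid when $F(Y)$ is birational to $S^{[2]}$, using Rie\ss\ to pass from the isomorphic to the birational case), together with the decomposition $h(Y)\cong t(Y)\oplus\bigoplus\LLL(m_j)$; finite--dimensionality of $h(Y)$ then follows immediately from that of $t(S)$.

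You instead route through $F(Y)$: you establish (correctly, via Rie\ss\ and de Cataldo--Migliorini) that $h(F(Y))$ is finite--dimensional, and then need to descend to $Y$. That descent is precisely where your argument has a hole: you assert that $h(Y)$, up to Tate twist, is a direct summand of $h(F(Y))$ via the incidence correspondence, but you leave the twist and the normalizing constant as question marks and hedge between several inequivalent formulations (``a summand of $h(F(Y))\otimes(\text{curves})$ or directly of $h(F(Y))$''). Exhibiting $t(Y)$ as a direct summand of $h(F(Y))(1)$ requires producing an explicit correspondence $Q$ with $Q\circ{}^tP$ acting as the identity on $t(Y)$, i.e.\ a motivic lift of the Beauville--Donagi Abel--Jacobi isomorphism together with a computation of ${}^tP\circ P$ on the transcendental part; this is nontrivial and is essentially the content of the Pedrini/Shen--Vial results the paper cites. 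As written, your proof is incomplete at exactly the step that carries the weight; replacing your third paragraph with a citation of the isomorphism $t(Y)\cong t(S)(1)$ (which also makes the detour through $S^{[2]}$ and de Cataldo--Migliorini unnecessary) would close the gap and recover the paper's argument.
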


\begin{proof} For any smooth cubic fourfold $Y\subset\PP^5(\C)$, Pedrini \cite[Section 4]{Ped2} has defined the ``transcendental part of the motive'' $t(Y)$ such that
  there is a decomposition
   \[ h(Y) \cong t(Y)\oplus \bigoplus \LLL(m_j)\ \ \ \hbox{in}\ \MM_{\rm rat}\ ,\]
   and such that $A^3_{hom}(Y)=A^\ast(t(Y))$.
   
   Moreover, in case the Fano variety of lines $F(Y)$ is birational to a Hilbert scheme $S^{[2]}$, there is an isomorphism
   \begin{equation}\label{chowiso} t(Y)\cong t(S)(1)\ \ \ \hbox{in}\ \MM_{\rm rat}\ ,\end{equation}
   where the right--hand side denotes the transcendental part of the motive of a surface \cite{KMP}.
   (NB: in \cite[Theorem 4.6]{Ped2}, the isomorphism (\ref{chowiso}) is established under the hypothesis that $F(Y)$ is isomorphic to $S^{[2]}$. However, in view of the fact that birational hyperk\"ahler varieties have isomorphic Chow motives \cite{Rie}, the same proof goes through when $F(Y)\thicksim_{\rm birat} S^{[2]}$.)
   
 Since any $K3$ surface with Picard number $\ge 19$ has finite--dimensional motive (cf. \cite{Ped}, or the proof of lemma \ref{19} below), the isomorphism (\ref{chowiso}) proves the proposition.
\end{proof}

\section{Main}

\begin{theorem}\label{main} Let $Y\subset\PP^5(\C)$ be a very special cubic. Then conjecture \ref{conjcube} holds for $Y$.
\end{theorem}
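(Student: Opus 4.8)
The plan is to bypass the Fano variety and argue directly with the transcendental motive of $Y$, reducing Conjecture \ref{conjcube} for $Y$ to Voisin's Conjecture \ref{conjvois} for the associated $K3$ surface $S$, which is accessible because $\rho(S)\ge 19$. Recall from the proof of Proposition \ref{fd} that Pedrini's construction gives a Chow motive $t(Y)=(Y,\pi^{tr}_Y,0)$ with $(\pi^{tr}_Y)_\ast A^\ast(Y)=A^3_{hom}(Y)$ concentrated in codimension $3$, together with the isomorphism $t(Y)\cong t(S)(1)$ in $\MM_{\rm rat}$, where $t(S)$ is the transcendental motive of a $K3$ surface $S$ with $\rho(S)\ge 19$; moreover all these motives are finite--dimensional.

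First I would observe that the cycle to be annihilated lives in the alternating tensor square. For $a,a^\prime\in A^3_{hom}(Y)=A^\ast(t(Y))$ one has $(\pi^{tr}_Y)_\ast a=a$, hence $a\times a^\prime=(\pi^{tr}_Y\otimes\pi^{tr}_Y)_\ast(a\times a^\prime)\in A^6(t(Y)\otimes t(Y))$, and likewise for $a^\prime\times a$. Letting $\iota$ be the involution of $Y\times Y$ exchanging the two factors, $\iota$ commutes with the symmetric correspondence $\pi^{tr}_Y\otimes\pi^{tr}_Y$, so $\tfrac12(\ide-\iota_\ast)$ is an idempotent on $A^6(t(Y)\otimes t(Y))$ whose image is $A^6(\wedge^2 t(Y))$, where $\wedge^2 t(Y)$ denotes the alternating Chow submotive of $t(Y)^{\otimes 2}$. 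Since $a\times a^\prime-a^\prime\times a=(\ide-\iota_\ast)(a\times a^\prime)$ lies in that image, it suffices to prove $A^6(\wedge^2 t(Y))=0$.

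Next I would transport the problem to $S$. The isomorphism $t(Y)\cong t(S)(1)$ is compatible with the factor--exchange involutions (the swap on $\LLL^{\otimes 2}$ being the identity), hence it induces $\wedge^2 t(Y)\cong(\wedge^2 t(S))(2)$ and therefore $A^6(\wedge^2 t(Y))\cong A^4(\wedge^2 t(S))$. So everything comes down to $A^4(\wedge^2 t(S))=0$, which is the motivic form of Voisin's conjecture for $S$. To establish it I would compute $\wedge^2 t(S)$ using $\dim H^2_{tr}(S)=22-\rho(S)\in\{2,3\}$. When $\rho(S)=20$, the finite--dimensional motive $\wedge^2 t(S)$ has one--dimensional Tate--type cohomology, so over $\C$ it is isomorphic to $\LLL^2$ and $A^4(\wedge^2 t(S))=A^4(\LLL^2)=0$. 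When $\rho(S)=19$, the same argument gives $\wedge^3 t(S)\cong\LLL^3$; the intersection form on $H^2_{tr}(S)$ furnishes an isomorphism $t(S)^\vee\cong t(S)(-2)$ in $\MM_{\rm rat}$ (a homological isomorphism between finite--dimensional motives being an isomorphism), and the cofactor identity $\wedge^2 t(S)\cong t(S)^\vee\otimes\wedge^3 t(S)$ then yields $\wedge^2 t(S)\cong t(S)(1)$, whence $A^4(\wedge^2 t(S))=A^4(t(S)(1))=A^3(t(S))=(\pi^{tr}_S)_\ast A^3(S)=0$ since $\dim S=2$. Chaining the three identifications gives $a\times a^\prime-a^\prime\times a=0$ in $A^6(Y\times Y)$, and by Proposition \ref{equiv} Conjecture \ref{conjHK} follows for $X=F(Y)$.

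The genuine content, and the step most likely to require care, is the last one: passing from the vanishing of a Hodge--theoretically obvious summand to the vanishing of the corresponding piece of the Chow group, i.e. Voisin's conjecture for $K3$ surfaces of Picard rank $\ge 19$. This is exactly where finite--dimensionality of $t(S)$ is indispensable — both to split off $\wedge^2 t(S)$ and $\wedge^3 t(S)$ as honest summands and to upgrade the numerical/homological identifications to isomorphisms of Chow motives; it is available here by the results on $K3$ surfaces of Picard rank $\ge 19$ recalled in the proof of Proposition \ref{fd}. One must also keep the Tate--twist bookkeeping straight so that the shifts in $t(Y)\cong t(S)(1)$ and $\wedge^2 t(Y)\cong(\wedge^2 t(S))(2)$ track the codimensions correctly; beyond that, everything is formal manipulation in $\MM_{\rm rat}$.
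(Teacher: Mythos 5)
Your overall architecture (antisymmetrize, transport along $t(Y)\cong t(S)(1)$, then kill $A^4(\wedge^2 t(S))$) is sound through the first two stages, which in fact parallel what the paper does in theorem \ref{main1}. The gap is in the claim that a finite--dimensional motive with one--dimensional cohomology of Tate type is isomorphic to a power of the Lefschetz motive: this is exactly the point where finite--dimensionality is \emph{not} enough. What finite--dimensionality gives you (via the Euler--characteristic/coevaluation argument) is that $\wedge^2 t(S)$ (for $\rho=20$), resp. $\wedge^3 t(S)$ (for $\rho=19$), is an \emph{invertible} motive whose realization is $\QQ(-2)$, resp. $\QQ(-3)$. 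To upgrade this to an isomorphism with $\LLL^2$, resp. $\LLL^3$, you must produce a morphism $\LLL^k\to\wedge^k t(S)$ inducing an isomorphism in cohomology, i.e. you must know that the generator of $\wedge^2 H^2_{tr}(S)\subset H^4(S\times S)$ (resp. of $\wedge^3 H^2_{tr}(S)\subset H^6(S^{\times 3})$) is an \emph{algebraic} class. That is a case of the Hodge conjecture, not a formal consequence of Kimura finiteness --- indeed, it is precisely the hypothesis the paper is forced to assume explicitly in theorem \ref{main1} (``Assume also that the Hodge conjecture is true for $Y\times Y$''); if your step were automatic, that hypothesis would be redundant there. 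The same issue infects the $\rho=19$ case even after the (otherwise correct) cofactor identity $\wedge^2 t(S)\cong t(S)^\vee\otimes\wedge^3 t(S)$ and the duality $t(S)^\vee\cong t(S)(2)$, since you still need $\wedge^3 t(S)$ to be a Lefschetz motive rather than merely an invertible motive with Tate--type realization.

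The paper's proof of theorem \ref{main} deliberately sidesteps this obstruction. Instead of computing $\wedge^2 t(S)$, it uses the Shioda--Inose structure to produce a correspondence injecting $A^2_{hom}(S)$ into $A^2_{AJ}(A)$ for an abelian surface $A$ (lemma \ref{19}), and then invokes Voisin's unconditional result \cite[Example 4.40]{Vo} that $b\times b^\prime-b^\prime\times b=0$ for $b,b^\prime\in A^2_{(2)}(A)$ --- a statement proved by explicit cycle--theoretic arguments on abelian varieties, with no Hodge--conjecture input. To repair your argument you would need either to supply the algebraicity of the relevant Hodge classes (e.g. by transporting them to $A\times A$, resp. $A^{\times 3}$, via Shioda--Inose and citing the known cases of the Hodge conjecture for powers of abelian surfaces), or to replace your final step by the paper's reduction to Voisin's theorem.
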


\begin{proof} Let $S$ be the $K3$ surface associated to $Y$ (so by definition, $S$ has Picard number $\ge 19$). We have already seen (in the course of the proof of proposition \ref{fd}) that there is an isomorphism of Chow motives
  \[  t(Y)\cong t(S)(1)\ \ \ \hbox{in}\ \MM_{\rm rat}\ .\]
  Taking Chow groups, this implies there is a correspondence--induced isomorphism
  \[ (\Gamma^\prime)_\ast\colon\ \ \ A^3_{hom}(Y)\ \xrightarrow{\cong}\ A^2_{hom}(S)\ .\]
 
 We need a little lemma:
  
 \begin{lemma}\label{19} Let $S$ be a $K3$ surface of Picard number $\rho(S)\ge 19$. Then there exist an abelian surface $A$, and a correspondence $\Psi\in A^2(S\times A)$ inducing an injection
   \[ \Psi_\ast\colon\ \ \ A^2_{hom}(S)\ \hookrightarrow\ A^2_{AJ}(A)\ .\]
 \end{lemma}
 
 \begin{proof} This is well--known; this is how finite--dimensionality of $S$ is proven. 
 
 First, let us assume $\rho(S)=20$. Then $S$ is either a Kummer surface, or there is a rational degree $2$ map $K\dashrightarrow S$ where $K$ is Kummer \cite{SI}. Clearly, this gives a correspondence $\Psi$ as in the lemma.
 
 Next, let us assume $\rho=19$. Then $S$ admits a Shioda--Inose structure, i.e. there exists an involution $i$ on $S$ such that the quotient $S/<i>$ is birational to a Kummer surface $K$ \cite{Mor}. The involution $i$ being symplectic, one has a correspondence--induced isomorphism
   \[ A^2(S)\ \xrightarrow{\cong}\ A^2(K) \]
 \cite{V11}. This induces $\Psi$ as in the lemma.
 \end{proof}  
 
 Using lemma \ref{19}, one reduces to a statement about $0$--cycles on abelian surfaces. Indeed, let $\Gamma$ be the correspondence
   \[ \Gamma:= \Psi\circ \Gamma^\prime\ \ \ \in A^3(Y\times A)\ .\]
 There is a commutative diagram
   \[ \begin{array}[c]{ccc}
      A^3_{hom}(Y)\otimes A^3_{hom}(Y) & \xrightarrow{\nu_Y} & A^6(Y\times Y)\\
      &&\\
       \downarrow{\scriptstyle (\Gamma_\ast, \Gamma_\ast)} && \downarrow{\scriptstyle (\Gamma\times\Gamma)_\ast}\\
       &&\\
      A^2_{AJ}(A)\otimes A^2_{AJ}(A) & \xrightarrow{\nu_A} & \ \, A^4(A\times A)\ ,\\ 
      \end{array} \]
      where $\nu_Y$ is defined as $\nu_Y(a,a^\prime):= a\times a^\prime - a^\prime\times a$, and $\nu_A$ is defined similarly.
      
   By construction, the left vertical arrow is injective. The right vertical arow is injective when restricted to $\ima \nu_Y$ (indeed, a left inverse is given by $(C\times C)_\ast$, where $C$ is a correspondence such that $C_\ast \Gamma_\ast=\ide\colon A^3_{hom}(Y)\to A^3_{hom}(Y)$). Hence, we are reduced to proving that
   $\nu_A$ is the zero map. This is a special case of the following more general result (combined with the fact that $A^2_{AJ}(A)$ coincides with the piece $A^2_{(2)}(A)$ of the Beauville splitting):
   
  \begin{proposition}[Voisin \cite{Vo}] Let $A$ be an abelian variety of dimension $g$. Let $b, b^\prime\in A^g_{(g)}(A)$, where $A^\ast_{(\ast)}(A)$ denotes the Beauville splitting of \cite{Beau}. There is equality
  \[ b\times b^\prime - (-1)^g \, b^\prime\times b=0\ \ \ \hbox{in}\ A^{2g}_{(2g)}(A\times A)\ .\]
  \end{proposition}
  
 \begin{proof} This is \cite[Example 4.40]{Vo}.
 \end{proof}
  
 The proof of theorem \ref{main} is now complete. 
\end{proof}

 \section{Two cubics}    

In this section, conjectures \ref{conjcube} and \ref{conjHK} are proven for two cubics:

   \begin{theorem}\label{main2} Let $Y\subset\PP^5(\C)$ be either the Fermat cubic
   \[ X_0^3 + X_1^3 + \cdots + X_5^3=0\ ,\]
or the smooth cubic defined as
   \[  X_0^3 + X_1^2X_5 + X_2^2X_4 + X_3^2X_2 +X_4^2X_1 +X_5^2X_3=0\ .\]
 Conjecture \ref{conjcube} is true for $Y$, i.e. for any two $1$--cycles $a,a^\prime\in A^3_{hom}(Y)$, there is equality
   \[ a\times a^\prime =  a^\prime\times a\ \ \ \hbox{in}\ A^6(Y\times Y)\ .\]
   Conjecture \ref{conjHK} is true for the Fano variety of lines $X=F(Y)$, i.e. for any two $0$--cycles $a,a^\prime\in A^4_{(j)}(X)$, there is equality
   \[ a\times a^\prime = a^\prime\times a\ \ \ \hbox{in}\ A^8(X\times X)\ .\]
   \end{theorem}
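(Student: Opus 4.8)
The plan is to deduce theorem~\ref{main2} from the machinery already used in the proof of theorem~\ref{main}. Concretely, for each of the two cubics $Y$ I want to produce an abelian surface $A$ together with a correspondence $\Gamma\in A^3(Y\times A)$ such that $\Gamma_\ast\colon A^3_{hom}(Y)\to A^2_{(2)}(A)=A^2_{AJ}(A)$ is injective and admits a left inverse induced by a correspondence. Granting this, the argument runs exactly as in the proof of theorem~\ref{main}: one forms the commutative square relating $\nu_Y(a,a')=a\times a'-a'\times a$ on $Y$ to $\nu_A$ on $A$, observes that the vertical maps are injective on $\ima\nu_Y$, and invokes Voisin's Example~4.40 (the case $g=2$, which gives $b\times b'=b'\times b$ in $A^4(A\times A)$ for $b,b'\in A^2_{(2)}(A)$) to conclude $\nu_A=0$, hence $\nu_Y=0$. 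This is conjecture~\ref{conjcube} for $Y$, and proposition~\ref{equiv} then gives conjecture~\ref{conjHK} for $X=F(Y)$. (One could instead try to show that the two cubics are very special in the sense of definition~\ref{def} and invoke theorem~\ref{main}; I return to this at the end.)

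For the Fermat cubic the correspondence comes from the classical structure of Fermat motives. By Shioda--Katsura the motive of the Fermat cubic fourfold is a direct summand of the motive of a product of degree $3$ Fermat curves, i.e. of a power of an elliptic curve $E$ with complex multiplication by $\QQ(\sqrt{-3})$; in particular $h(Y)$ is finite-dimensional of abelian type. Decomposing $H^4_{prim}(Y)$ into eigenspaces for the evident $(\ZZ/3)^6$-action, one obtains a splitting into eleven rank $2$ sub-Hodge structures of CM type (the Galois group acting through $(\ZZ/3)^\times=\{\pm1\}$), exactly one of which --- the eigenspace of the diagonal character --- is of Hodge type $(3,1)+(1,3)$, the remaining ten lying in $H^{2,2}$. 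Thus $H^4_{tr}(Y)$ is that single eigenspace, so $\dim H^4_{tr}(Y)=2$ and $\dim N^2 H^4(Y)=23-2=21$; moreover the corresponding projector is an honest algebraic cycle, cutting $t(Y)$ out as a direct summand of $h(E\times E')$ for CM elliptic curves $E,E'$. After the Tate twist, $t(Y)(1)$ is a $K3$-type CM motive of transcendental rank $2$; by Shioda--Inose theory its realization is the transcendental lattice of an abelian surface $A$, and the induced map of Chow groups is the desired split injection $A^3_{hom}(Y)\hookrightarrow A^2_{(2)}(A)$. (Equivalently one can route this through the associated singular $K3$ surface and lemma~\ref{19}.)

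For the second cubic the key observation is that it is of Delsarte type: its defining polynomial is a sum of six monomials whose $6\times6$ matrix of exponents is invertible. Indeed, expanding along the column of $X_0$ leaves a $5\times5$ matrix with diagonal entries $2$ whose remaining nonzero entries are $1$'s occupying the positions of a single $5$-cycle (an even permutation), so its determinant equals $2^5+1=33$ and that of the $6\times6$ matrix equals $3\cdot33=99\neq0$. Hence $Y$ is birational to a finite abelian quotient of the Fermat cubic fourfold, so $H^4(Y)$ is the invariant part of $H^4$ of the Fermat cubic; in particular $H^4_{tr}(Y)$ is a sub-Hodge structure of the rank $2$ irreducible $H^4_{tr}$ of the Fermat cubic, and being nonzero (since $h^{3,1}(Y)=1$) it is all of it. So again $\dim H^4_{tr}(Y)=2$, and the analysis of the previous paragraph produces the same kind of correspondence to an abelian surface.

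The step I expect to be the main obstacle is upgrading ``abelian-type motive with complex multiplication'' to ``split summand of $h^2$ of an abelian surface, with Chow-group image in $A^2_{(2)}$'' at the level of Chow motives rather than merely Hodge structures: one must check that the eigenprojectors and the correspondence realising the dominant rational map are genuine algebraic cycles, keep careful track of the Tate twist so that the image lands exactly in the deepest piece $A^2_{(2)}(A)=A^2_{AJ}(A)$ to which Voisin's Example~4.40 applies, and, for the second cubic, handle the finite group quotient. For the Fermat case all of this is essentially classical. If one prefers the route through theorem~\ref{main}, the obstacle instead becomes condition~(1) of definition~\ref{def}, i.e. showing that $F(Y)\sim_{\rm birat}S^{[2]}$ for a $K3$ surface $S$, equivalently that $Y$ lies on an admissible Hassett divisor $\Cc_d$; this should follow from $\dim N^2 H^4(Y)=21$ by a short lattice-theoretic argument, since a rank $21$ algebraic lattice of the appropriate signature represents many admissible discriminants.
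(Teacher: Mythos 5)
Your proposal takes a genuinely different route from the paper, and it has a real gap. The paper does \emph{not} construct a correspondence from $Y$ to an abelian surface for these two cubics; it deliberately avoids that. Instead it proves an intermediate result (theorem \ref{main1}): if $Y$ has finite--dimensional motive, $\dim N^2H^4(Y)=21$, and the Hodge conjecture holds for $Y\times Y$, then conjecture \ref{conjcube} holds. The proof is a nilpotence argument: the idempotent $\Gamma=\tfrac12(\Delta_{Y\times Y}-\Gamma_\iota)\circ(\pi_{tr}\times\pi_{tr})$ cutting out $\wedge^2t(Y)$ is cohomologous (by the Hodge conjecture for $Y\times Y$, since $\wedge^2H^4_{tr}(Y)$ is one--dimensional) to a cycle supported on $Z\times Z$ with $\codim Z=4$, hence by Kimura nilpotence $\Gamma_\ast=0$ on $A^6(Y\times Y)$. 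The only inputs are the three hypotheses, checked separately for each cubic; no abelian surface, no Shioda--Inose, no Hodge conjecture for $Y\times A$ is needed. The author moreover states explicitly that it is \emph{not} known whether these two cubics are very special, so your fallback via theorem \ref{main} (and your hoped-for ``short lattice-theoretic argument'' that rank $21$ forces an admissible Addington discriminant of the form $2(n^2+n+1)/a^2$) is not available as stated.

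The concrete error is in your treatment of the second cubic. The Delsarte/Shioda construction for an exponent matrix $A$ produces a dominant rational map onto $Y$ from the Fermat hypersurface of degree $m$, where $m$ is the smallest integer with $mA^{-1}$ integral --- not from the Fermat hypersurface of the same degree. Here $\det A=99$, and $3A^{-1}$ is not integral (its determinant would be $3^6/99\notin\ZZ$); the relevant covering Fermat has degree $33$. So $Y$ is \emph{not} a quotient of the Fermat cubic, $H^4_{tr}(Y)$ is not a sub--Hodge structure of the rank--$2$ transcendental lattice of the Fermat cubic, and your deduction that $\dim H^4_{tr}(Y)=2$ collapses. (The paper obtains $\dim H^4_{tr}(Y)=2$ for this cubic from Mongardi's work on the order--$11$ symplectic automorphism of $F(Y)$, and proves the Hodge conjecture for $Y\times Y$ by exhibiting the two non--proportional algebraic classes $\pi_{tr}$ and $\Gamma_\sigma\circ\pi_{tr}$, with $\sigma$ the order--$3$ covering automorphism.) For the Fermat cubic your character computation of $H^4_{tr}$ is correct, but the step you yourself flag as the main obstacle --- upgrading the rank--$2$ CM Hodge structure to a split Chow--motivic summand of $h^2(A)(1)$ via an \emph{algebraic} correspondence landing in $A^2_{(2)}(A)$ --- is precisely what is missing, and nothing cited in the paper supplies it (Pedrini's isomorphism $t(Y)\cong t(S)(1)$ requires $F(Y)\thicksim_{\rm birat}S^{[2]}$, which is exactly the unknown condition). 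So neither half of the theorem is actually proved by the proposal as written.
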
  
     
I do not know whether the two cubics of theorem \ref{main2} are very special (if they are, the result would immediately follow from theorem \ref{main}). Therefore,
to prove theorem \ref{main2} we proceed slightly differently. Theorem \ref{main2} will be a consequence of the following result:

  \begin{theorem}\label{main1} Let $Y\subset\PP^5(\C)$ be a smooth cubic. Assume that $Y$ has finite--dimensional motive, and that
   \[ \dim N^2 H^4(Y)=21\ .\]
   Assume also that the Hodge conjecture is true for $Y\times Y$.
   Then conjecture \ref{conjcube} holds for $Y$, and conjecture \ref{conjHK} holds for the Fano variety $X=F(Y)$.
  \end{theorem}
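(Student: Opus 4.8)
The plan is to reduce, as in the proof of theorem \ref{main}, the statement about $0$-cycles (resp. $1$-cycles) to a statement about $0$-cycles on an abelian surface, where it is settled by Voisin's result \cite[Example 4.40]{Vo}. The extra input needed, compared with the very special case, is to replace the geometric isomorphism $t(Y)\cong t(S)(1)$ (which came from $F(Y)\sim_{\rm birat}S^{[2]}$) by a motivic argument that only uses finite-dimensionality, the numerical size $\dim N^2H^4(Y)=21$, and the Hodge conjecture for $Y\times Y$. First I would recall Pedrini's decomposition $h(Y)\cong t(Y)\oplus\bigoplus\LLL(m_j)$ with $A^3_{hom}(Y)=A^\ast(t(Y))$, so that $t(Y)$ is the only ``interesting'' summand. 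The hypothesis $\dim N^2H^4(Y)=21$ means the transcendental cohomology $H^4_{tr}(Y)$ has dimension $2$, i.e. it is a weight-$4$ Hodge structure which — after a Tate twist — is an effective weight-$2$ Hodge structure of K3 type of rank $2$ with $h^{2,0}=1$. A rank-$2$ Hodge structure of K3 type with $h^{2,0}=1$ is, by the theory of singular K3 surfaces (the Shioda--Inose/Kummer story used already in lemma \ref{19}), isogenous to $H^1(E_1)\otimes H^1(E_2)$ for two elliptic curves $E_1,E_2$, hence is a sub-Hodge structure of $H^2$ of an abelian surface $A=E_1\times E_2$ (or of a Kummer surface attached to it).

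The second step is to upgrade this Hodge-theoretic statement to a statement about motives. Because we are assuming the Hodge conjecture for $Y\times Y$ (and hence, a fortiori, for $Y\times A$, $A\times A$, etc., after noting that products of $Y$ with abelian/Kummer surfaces are controlled — here one may have to enlarge the hypothesis slightly or note that the relevant Künneth components are algebraic because both factors satisfy the Hodge conjecture), there exist correspondences realizing the Hodge isometry $H^4_{tr}(Y)\cong H^2_{tr}(A)(-1)$ and its inverse, up to composing with the projectors onto the transcendental parts. At the level of homological motives this gives $t(Y)\cong t(A)(1)$ in $\MM_{\rm hom}$. Now invoke finite-dimensionality: $Y$ has finite-dimensional motive by hypothesis, and $A$ (being an abelian surface) has finite-dimensional motive, so $t(Y)$ and $t(A)(1)$ are finite-dimensional; by Kimura's result that on finite-dimensional motives homological and rational equivalence of morphisms coincide (the nilpotence theorem \ref{nilp}, applied to the idempotent $\Psi\circ\Phi-\ide$ and $\Phi\circ\Psi-\ide$), the homological isomorphism lifts to an isomorphism $t(Y)\cong t(A)(1)$ in $\MM_{\rm rat}$. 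Taking Chow groups yields a correspondence-induced isomorphism $A^3_{hom}(Y)\xrightarrow{\cong}A^2_{hom}(A)=A^2_{AJ}(A)=A^2_{(2)}(A)$.

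From here the argument is verbatim the one in theorem \ref{main}: with $\Gamma\in A^3(Y\times A)$ the correspondence inducing this isomorphism, one has the commutative square relating $\nu_Y(a,a')=a\times a'-a'\times a$ on $A^3_{hom}(Y)^{\otimes 2}$ to $\nu_A$ on $A^2_{(2)}(A)^{\otimes 2}$ via $(\Gamma_\ast,\Gamma_\ast)$ and $(\Gamma\times\Gamma)_\ast$; the left vertical map is injective by construction, the right vertical map is injective on $\ima\,\nu_Y$ because a two-sided inverse $C$ of $\Gamma$ gives $(C\times C)_\ast$ as a left inverse, and $\nu_A=0$ by Voisin's proposition applied with $g=2$ (note $(-1)^g=1$ and $A^2_{(2)}(A)=A^g_{(g)}(A)$). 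This proves conjecture \ref{conjcube} for $Y$; conjecture \ref{conjHK} for $X=F(Y)$ then follows from proposition \ref{equiv}. The main obstacle I expect is the second step — passing from the Hodge isometry to an actual correspondence and checking it suffices to have the Hodge conjecture for $Y\times Y$ (rather than for the mixed products with $A$); the cleanest route is probably to realize $A$ as a Kummer-type quotient built from a singular K3 surface already ``visible'' inside $Y\times Y$, so that the needed algebraic classes are pulled back from $Y\times Y$, or alternatively to observe that the Hodge conjecture holds unconditionally for self-products of abelian surfaces and for $A\times$ (Kummer), leaving only the genuinely new class on $Y\times A$, whose algebraicity is exactly the content one extracts from the Hodge conjecture for $Y\times Y$ together with the (algebraic) correspondence between $A$ and the singular K3 surface.
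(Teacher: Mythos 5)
Your reduction to conjecture \ref{conjcube}, the end--game via the commutative square and Voisin's \cite[Example 4.40]{Vo}, and the use of nilpotence to lift a homological isomorphism of finite--dimensional motives to $\MM_{\rm rat}$ are all sound. But there is a genuine gap at the pivotal step, and you have correctly located it yourself without resolving it: you need an \emph{algebraic} correspondence $\Gamma\in A^\ast(Y\times A)$ realizing the Hodge isometry $H^4_{tr}(Y)\cong H^2_{tr}(A)(-1)$, and the algebraicity of such a class is an instance of the Hodge conjecture for $Y\times A$ --- which is not among the hypotheses. The hypotheses of the theorem are purely numerical/motivic; unlike the very special case, there is no geometric bridge ($F(Y)\thicksim_{\rm birat}S^{[2]}$) linking $Y$ to any surface, so the abelian surface $A$ you produce exists only as an abstract Hodge--theoretic isogeny partner of $H^4_{tr}(Y)(1)$. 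Your two proposed workarounds do not close this: there is no singular $K3$ surface ``visible'' inside $Y\times Y$, and knowing the Hodge conjecture for $A\times A$ and $A\times(\hbox{Kummer})$ still leaves the class on $Y\times A$ untouched --- a Hodge class in $H^4_{tr}(Y)^\vee\otimes H^2_{tr}(A)$ cannot be extracted from Hodge classes on $Y\times Y$ by any mechanism you have described. As written, your argument proves the theorem only under the stronger hypothesis that the Hodge conjecture holds for $Y\times A$.

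The paper's proof avoids the detour through an abelian surface entirely, and this is exactly why the hypothesis ``Hodge conjecture for $Y\times Y$'' suffices. It forms the idempotent $\Gamma=\tfrac12(\Delta_{Y\times Y}-\Gamma_\iota)\circ(\pi_{tr}\times\pi_{tr})$ cutting out $\wedge^2 t(Y)$, observes that $\dim H^4_{tr}(Y)=2$ forces $W:=\wedge^2H^4_{tr}(Y)\subset H^8(Y\times Y)\cap F^4$ to be one--dimensional, and invokes the assumed Hodge conjecture \emph{for $Y\times Y$} to write $W=\QQ[\pi]$ with $\pi$ algebraic and supported on a $4$--dimensional $Z\subset Y\times Y$. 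Then $\Gamma$ is homologically equal to a completely decomposed cycle $\gamma$ supported on $Z\times Z$, the nilpotence theorem gives $\Gamma=\Gamma^{\circ N}=\sum Q_j$ with each $Q_j$ containing $\gamma$, and a dimension count shows $\gamma_\ast=0$ on $A^6(Y\times Y)$, whence $\Gamma_\ast(b\times b')=\tfrac12(b\times b'-b'\times b)=0$. If you want to salvage your route, you would either have to add the Hodge conjecture for $Y\times A$ to the hypotheses, or find an unconditional construction of the correspondence $Y\vdash A$; the paper's argument shows neither is necessary.
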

  

%
%


\begin{proof}(of theorem \ref{main1}) In view of proposition \ref{equiv}, it suffices to prove that conjecture \ref{conjcube} holds for $Y$.
We prove this by an argument similar to \cite[Theorem 4.1]{BLP}, which is the analogue of theorem \ref{main1} for Calabi--Yau varieties.

Let $\pi_{tr}\in A^4(Y\times Y)$ denote the idempotent defining the motive $t(Y)\in\MM_{\rm rat}$ of \cite{Ped2}. (Alternatively, one could define $\pi_{tr}:=\Pi_{3,1}$, where $\Pi_{i,j}$ refers to the refined Chow--K\"unneth decomposition of \cite[Theorems 1 and 2]{V4}.)
By construction, one has
  \[ \begin{split}  (\pi_{tr})_\ast H^\ast(Y) &= H^4_{tr}(Y)\ ,\\
                          (\pi_{tr})_\ast A^\ast(Y) &= A^3_{hom}(Y)\ .\\
                         \end{split}\]
        Let us consider the involution
         \[  \begin{split}  \iota\colon Y\times Y\ &\to Y\times Y\ , \\
          \iota(x,y)&:=(y,x)\ .\\
          \end{split}\]  
          We define a correspondence
          \[ \Gamma:= {1\over 2}  (\Delta_{Y\times Y}-\Gamma_\iota)\circ (\pi_{tr}\times\pi_{tr})\ \ \ \in A^8\bigl((Y\times Y)\times (Y\times Y)\bigr)\ .\]
      The correspondence $\Gamma$ is an idempotent (and actually, $\Gamma$ defines the motive $\wedge^2 t(Y)   \in \MM_{\rm rat}$ in the language of Kimura \cite{Kim}). 
       The correspondence $\Gamma$ acts on cohomology as projector on 
      \[ \wedge^2 H^4_{tr}(Y)\ \ \subset\ H^8(Y\times Y)\ .\]
                      
  By hypothesis, $\dim H^4_{tr}(Y)=2$ and so $\wedge^2 H^4_{tr}(Y)$ is one--dimensional. Moreover, there is an inclusion
   \[ \wedge^2 H^4_{tr}(Y)\ \ \subset\ H^8(Y\times Y)\cap F^4\ ,\]  
   and so (since we assume the Hodge conjecture is true for $Y\times Y$) we have
   \[  \wedge^2 H^4_{tr}(Y) =\QQ [\pi_{}]\ \ \ \subset\ H^8(Y\times Y)  \]
   for some cycle $\pi\in A^4(Y\times Y)$.

Now, for brevity let us write $W:=\wedge^2 H^4_{tr}(Y)\subset H^8(Y\times Y)$.
The correspondence $\Gamma$ acts as projector on $W$. Hence, there is an inclusion
  \[ \Gamma\ \ \in  W\otimes W\ \ \ \subset H^{16}\bigl( (Y\times Y)\times (Y\times Y)\bigr)\ .\]
  Since $W$ is one--dimensional and generated by the cycle $\pi_{}$, this implies that
   \[ \Gamma = \gamma:=\lambda (p_{12})^\ast(\pi_{}) \cdot (p_{34})^\ast(\pi_{})\ \ \ \hbox{in}\ H^{16}\bigl( (Y\times Y)\times (Y\times Y)\bigr)\ ,\]
for some $\lambda\in\QQ$. Here, $p_{12}$ and $p_{34}$ are the projections from $Y^4$ to the first two (resp. last two) factors. In other words, we have
  \[ \Gamma-\gamma\ \ \ \in A^8_{hom}\bigl( (Y\times Y)\times (Y\times Y)\bigr)\ .\]
  But $Y\times Y$ has finite--dimensional motive, and so theorem \ref{nilp} ensures there exists $N\in\NN$ such that
   \[ (\Gamma-\gamma)^{\circ N}=0\ \ \ \hbox{in}\ A^8_{}\bigl( (Y\times Y)\times (Y\times Y)\bigr)\ .\]  
Developing this expression (and remembering that $\Gamma$ is idempotent), this means that
  \[ \Gamma=\Gamma^{\circ N}=Q_1 + \cdots + Q_r\ \ \ \hbox{in}\ A^8_{}\bigl( (Y\times Y)\times (Y\times Y)\bigr)\ ,\]  
  where each $Q_j$ is a composition of correspondences containing $\gamma$ at least once.
  The correspondence $\gamma$ is supported on $Z\times Z\subset Y^4$, where $Z\subset Y\times Y$ is a codimension $4$ subvariety (indeed, $Z$ is the support of the cycle $\pi_{}$). For this reason, $\gamma$ acts trivially on $2$--cycles, i.e.
    \[ \gamma_\ast =0\colon\ \ \ A^6(Y\times Y)\ \to\ A^6(Y\times Y)\ .\]
    (Indeed, the action factors over $A^6(\wt{Z})=0$, where $\wt{Z}\to Z$ is a resolution of singularities.)
   It follows that each $Q_j$, and hence also $\Gamma$, acts trivially on $2$--cycles:
   \[ \Gamma_\ast =0\colon\ \ \ A^6(Y\times Y)\ \to\ A^6(Y\times Y)\ .\]
This clinches the theorem: let $b, b^\prime\in A^3_{hom}(Y)$. Then
  \[ 2\Gamma_\ast(b\times b^\prime)= b\times b^\prime -b^\prime\times b =0\ \ \ \hbox{in}\ A^6(Y\times Y)\ .\]
\end{proof}

Finally, let us prove theorem \ref{main2}:
     
  \begin{proof}(of theorem \ref{main2}) We need to check that the two cubics of theorem \ref{main2} verify the two conditions of theorem \ref{main1}. Finite--dimensionality of the Fermat hypersurface is well--known; this follows from the Shioda inductive structure \cite{Shi}, \cite{KS}. Finite--dimensionality of the second cubic follows from \cite{excubic}, where it is proven that any smooth cubic fourfold of the form
  \[ X_0^3 + f(X_1,\ldots, X_5)=0 \]
  has finite--dimensional motive.
  
  The fact that $\dim N^2 H^4(Y)=21$ (i.e., $\dim H^4_{tr}(Y)=2$) is well--known for the Fermat cubic fourfold; Beauville \cite[Proposition 11]{Beau4} attributes this to Shioda. The fact that 
    \begin{equation}\label{2} \dim H^4_{tr}(Y)=2\end{equation}
  also for the second cubic is proven by Mongardi \cite{Mon}; this is an application of \cite[Proposition 1.2]{Mon} combined with the fact that the Fano variety $X=F(Y)$ admits an order $11$ symplectic automorphism.
(Alternatively, a different proof of equality (\ref{2}) for the second cubic can be found in \cite[Section 5.5.2]{BNS}.)

The Hodge conjecture is known for self--products of Fermat hypersurfaces of degree $\le 20$ \cite[Theorem IV]{Shi}. For the second cubic, let us write $V:=H^4_{tr}(Y)$. By hypothesis, the complexification $V_\C$ is such that
    \[  V_\C=H^{3,1}(Y)\oplus H^{1,3}(Y)\ .\]
 The complex vector space
   \[ (V\otimes V)_\C  \cap H^{4,4}(Y\times Y) = \bigl(  H^{3,1}(Y)\otimes H^{1,3}(Y)    \bigr) \oplus \bigl(  H^{1,3}(Y)\otimes H^{3,1}(Y)   \bigr) \]
  is two--dimensional, and so the $\QQ$--vector space
   \[ (V\otimes V)  \cap H^{4,4}(Y\times Y)   \]
   has dimension at most $2$. 
   Since the cubic $Y$ is a triple cover of $\PP^4$, there exists a non-symplectic automorphism $\sigma\in\aut(X)$ of order $3$.
   The cycles $\pi_{tr}$ and $\Gamma_\sigma\circ\pi_{tr}$ have cohomology class in $V\otimes V$. The first acts as the identity on $H^{3,1}(Y)$, while the second acts as multiplication by a primitive $3$rd root of unity. It follows that these two cycles are not proportional, and so they generate $ (V\otimes V)  \cap F^4$. This proves the Hodge conjecture for $Y\times Y$.   
  \end{proof}

\vskip1cm
\begin{nonumberingt} Thanks to "ik ben een kangaroe" Kai and thanks to "ik ben konijntje over" Len.
\end{nonumberingt}

\vskip1cm

\end{document}